\documentclass[a4paper]{amsart}
\usepackage{palatino,bbm,amssymb,amsmath,mathrsfs,amsthm}


\usepackage[v2,all]{xy}

\newcommand{\drpullback}[1][dr]{\save*!/#1-1.2pc/#1:(-1,1)@^{|-}\restore}

\newcommand{\Cc}{\mathscr{C}}
\newcommand{\Dd}{\mathscr{D}}

\newcommand{\Bb}{\mathscr{B}}
\newcommand{\Mm}{\mathscr{M}}
\newcommand{\Aa}{\mathscr{A}}
\newcommand{\Cat}{\mathscr{C}\hspace*{-0.04cm}at}
\newcommand{\Top}{\mathscr{T}\hspace*{-0.03cm}op}

\newcommand{\Set}{\mathscr{S}\hspace*{-0.06cm}et}
\newcommand{\Ab}{\mathscr{A}\hspace*{-0.04cm}b}

\newcommand{\Fun}{\mathscr{F}\hspace*{-0.06cm}un}
\newcommand{\Nat}{\mathscr{N}\hspace*{-0.09cm}at}

\newcommand{\Fib}{\mathscr{F}\hspace*{-0.04cm}ib}
\newcommand{\DiscFib}{\mathscr{D}\hspace*{-0.03cm}isc\Fib}
\newcommand{\PsdFun}{{\mathfrak{PsdFun}}}
\newcommand{\Shv}{{\mathfrak{Shv}}}
\newcommand{\Mod}{{\mathfrak{Mod}}}
\newcommand{\Rep}{{\mathfrak{Rep}}}
\newcommand{\dd}{\delta}
\newcommand{\ee}{\eta}

\newtheorem{theorem}{Theorem}[section]
\newtheorem{corollary}[theorem]{Corollary}
\newtheorem{proposition}[theorem]{Proposition}
\newtheorem{lemma}[theorem]{Lemma}
\theoremstyle{definition}
\newtheorem{definition}[theorem]{Definition}
\newtheorem{remark}[theorem]{Remark}
\newtheorem{example}[theorem]{Example}

\DeclareMathAlphabet{\mathbbe}{U}{bbold}{m}{n}
\newcommand{\standardsimplex}{\mathbbe{\Delta}}

\newcommand{\nn}{{\mathbb N}}
\newcommand{\Z}{{\mathbb Z}}

\newcommand{\F}{{\mathbb F}}
\newcommand{\R}{{\mathbb R}}
\newcommand{\A}{{\mathbb A}}

\newcommand{\op}{{op}}
\newcommand{\Hom}{\textup{Hom}}
\newcommand{\llim}{\textup{lim}}
\newcommand{\clim}{\mathop{\textup{colim}}\nolimits}
\newcommand{\Ext}{\textup{Ext}}
\newcommand{\Tor}{\textup{Tor}}

\newcommand{\h}{{\mathcal H}}
\newcommand{\Ner}[1]{{\mathcal N}(#1)}


\newcommand{\thomnat}{\mathfrak{NatS}^{GZ}}
\newcommand{\cochain}{\mathfrak{coChn}}
\newcommand{\thomcontra}{\mathfrak{NatS}_{GZ}}
\newcommand{\chain}{\mathfrak{Chn}}
\newcommand{\Qcoh}{\mathfrak{Qcoh}}

\newcommand{\symbhom}{\overline\Hom}
\newcommand{\symbten}{\underline\otimes}

\begin{document}

\title{Gabriel-Zisman cohomology and spectral sequences}
\thanks{The first author was partially supported by Spanish Ministry of Science and Catalan government grants
  PID2019-103849GB-I00, 
  2017 SGR 932, 
  MTM2017-90897-REDT, 
  MTM2016-76453-C2-2-P (AEI/FEDER, UE), 
  MTM2015-69135-P, 
and the third author by
MTM2016-76453-C2-2-P (AEI/FEDER, UE)
all of which are gratefully acknowledged. The second author thanks the Centre de Recerca Matem\`atica (CRM) in Bellaterra, Spain for inviting him during the research programme Homotopy Theory and Higher Categories (HOCAT), where this work was initiated.}

\author{Imma G\'alvez-Carrillo}
\address{Departament de Matem\`atiques,
  Universitat Polit\`ecnica de Catalunya}
\email{m.immaculada.galvez@upc.edu}
\author{Frank Neumann}
\address{School of Mathematics and Actuarial Science, Pure Mathematics Group, University of Leicester,
  University Road, Leicester LE1 7RH, United Kingdom}
  \email{fn8@le.ac.uk}
\author{Andrew Tonks}
\address{School of Mathematics and Actuarial Science, Pure Mathematics Group, University of Leicester,
  University Road, Leicester LE1 7RH, United Kingdom}
\email{apt12@le.ac.uk}

\date{}

\maketitle

\begin{abstract}
Extending constructions by Gabriel and Zisman, we develop a functorial framework for the cohomology and homology of simplicial sets with very general coefficient systems given by functors on simplex categories into abelian categories. Furthermore we construct Leray type spectral sequences for any map of simplicial sets. We also show that these constructions generalise and unify the various existing versions of cohomology and homology of small categories and as a bonus provide new insight into their functoriality.
\keywords{Cohomology of simplicial sets, cohomology of categories, Gabriel-Zisman cohomology, spectral sequences}
\end{abstract}

\section*{Introduction}

\noindent The purpose of this article is to investigate systematically the functoriality of Gabriel-Zisman cohomology and homology of simplicial sets. Gabriel-Zisman (co)homology was introduced by the authors in~\cite{GNT2} inspired by constructions originally due to Thomason~\cite{Wei2}, Gabriel-Zisman~\cite{GZ} and Dress~\cite{Dr} in order to give a simplicial interpretation of the various (co)homology theories for small categories including Baues-Wirsching and Hochschild-Mitchell (co)homology (compare~\cite{BW,GNT,Mi}). Gabriel-Zisman (co)homology is defined for any simplicial set $X$ with most general coefficient systems given by functors from the associated simplex category $\Delta/X$ to a given abelian category $\Aa$.  More precisely, we will work here with general coefficient system functors from $\Delta/X$ with values in arbitrary abelian categories $\Aa$, which are complete with exact products when considering cohomology and which are cocomplete with exact coproducts when considering homology. In particular, all constructions will work just fine when using coefficient systems functors with values in the category $\Ab$ of abelian groups. It turns out that these general coefficient systems, which we call Gabriel-Zisman natural systems, provide a systematic framework to study the (co)homology of simplicial sets, especially with respect to general naturality and functoriality properties. In particular we will also show in a direct way how Thomason (co)homology of small categories can be interpreted as Gabriel-Zisman (co)homology using the nerve construction and how its functoriality and naturality properties are just direct consequences of those of Gabriel-Zisman (co)homology. Another advantage of our approach is that using duality we get at once both cohomology and homology theories for small categories and simplicial sets. Furthermore, we will construct Leray type spectral sequences for Gabriel-Zisman cohomology and homology for any map $f\colon X\rightarrow Y$ of simplicial sets and identify the lower terms of these spectral sequences for particular coefficient systems. The Leray-Serre spectral sequences in cohomology and homology for Kan fibrations of simplicial sets are specialisations of these general Leray type spectral sequences (compare~\cite{Dr,GZ}). We aim to use these general Leray type spectral sequences in the future for calculations in various different situations and frameworks from algebraic geometry, algebraic topology and category theory.

In a related homological context, Fimmel \cite{Fi} developed a theory of Verdier duality for a particular class of cohomological coefficient systems on simplicial sets, corresponding via geometric realisation to sheaves on topological spaces. Such a duality theory was first conjectured by Beilinson and allows for interesting applications for example to Beilinson's theory of local adeles~\cite{Be} and to buildings for representations of reductive algebraic groups over finite fields. We expect that our general functorial formalism and the construction of Leray type spectral sequences developed here will give new insights and calculational tools in these algebraic situations.

Similar constructions as those considered here could also be made for cubical instead of simplicial sets as indicated by recent work of Husainov on the homology of cubical sets~\cite{Hu,Hu2}.

The article is structured as follows: In the first section we will recall fundamental constructions from the theory of simplicial sets and then introduce the general concepts of Gabriel-Zisman cohomology and homology of simplicial sets, study their functorial properties and show how these constructions unify and generalise existing notions of cohomology and homology of small categories. We also discuss several interesting examples for future exploration and applications. In the second section we will construct Leray type spectral sequences in Gabriel-Zisman cohomology and homology for any map of simplicial sets within our general framework. We will then specialise the coefficient systems for particular situations to be able to identify the lower pages of these spectral sequences in more familiar terms. And finally, the classical Leray-Serre spectral sequences for cohomology and homology of a Kan fibration of simplicial sets will be derived as special cases.

\section{Gabriel-Zisman (co)homology of simplicial sets}

\subsection{Categories of simplices and simplex categories}
We will collect in this subsection several fundamental concepts from the theory of simplicial sets and small categories, which will be needed later (compare also the systematic accounts in~\cite{GZ,GJ,I,MacL,GM} and~\cite{Ri}).

Let $\Delta$ as usual be the category whose objects are the totally ordered finite sets $[m]=\{0<1<\cdots<m\}$
and whose morphisms are the order preserving functions $\theta\colon  [m]\rightarrow [n]$ between them. Alternatively, we can regard $\Delta$ as a full subcategory of the category  $\Cat$ of small categories, whose objects are the categories  $[m]=(0 \rightarrow 1\rightarrow \cdots \rightarrow m)$.

Among the morphisms  of $\Delta$ are the \emph{coface maps}
$$\dd^i\colon [n-1]\rightarrow [n], \;\;\; 0\leq i\leq n$$
$$\dd^i(0 \rightarrow 1\rightarrow \cdots \rightarrow n-1)=(0\rightarrow 1\rightarrow \cdots \rightarrow i-1\rightarrow i+1\rightarrow \cdots \rightarrow n),$$
composing the arrows $i-1\rightarrow i\rightarrow i+1$, and the \emph{codegeneracy maps}
$$\ee^j\colon  [n+1]\rightarrow [n], \;\;\; 0\leq j\leq n,$$
$$\ee^j(0 \rightarrow 1\rightarrow \cdots \rightarrow n+1)=(0\rightarrow 1\rightarrow \cdots \rightarrow j\rightarrow j\rightarrow \cdots \rightarrow n)$$
inserting the identity morphism $id_j$ in the $j$-th position. These morphisms $\dd^i$ and $\ee^j$ satisfy the usual cosimplicial identities and give a set of generators and relations for the category $\Delta$ (compare ~\cite{BK},~\cite{GJ} and~\cite{MacL}).

Let $\Cc$ be a category. A \emph{simplicial object} in $\Cc$ is a functor $X\colon  \Delta^{op}\rightarrow \Cc$. Dually, a \emph{cosimplicial object} in $\Cc$ is a functor $X\colon  \Delta\rightarrow \Cc$. In particular, if $\Cc=\Set$ is the category of sets a functor $X\colon \Delta^{op}\rightarrow \Set$ is called a \emph{simplicial set} and a functor $X\colon \Delta \rightarrow \Set$ a \emph{cosimplicial set}. Simplicial objects in a category $\Cc$ form a category $\Delta^{op}\Cc$, where the morphisms are natural transformations. Dually, we have the category of cosimplicial objects $\Delta\Cc$.

In the category $\Delta^{op}\Set$ of simplicial sets we can consider for every integer $n\geq 0$ the representable simplicial set $\Delta[n]=\Hom_{\Delta}(-, [n])$ called the \emph{standard $[n]$-simplex}.
The \emph{$n$-simplices} $X_n$ of a simplicial set $X$ are given as $X_n=X([n])$ and we sometimes will also write $X_{\bullet}=\{X_n\}_{n\geq 0}$ to denote a simplicial set. As usual, we will denote by $d_i\colon  X_n\rightarrow X_{n-1}$ for $0\leq i\leq n$ the \emph{face maps} and  by $s_j\colon  X_n\rightarrow X_{n+1}$ for $0\leq j\leq n$ the \emph{degeneracy maps}.

The Yoneda Lemma readily implies that the $n$-simplices of a simplicial set $X$ are in bijective correspondence with the morphisms of simplicial sets from $\Delta[n]$ to $X$ i.e., $X_n\cong \Hom_{\Delta^{op}\Set} (\Delta[n], X)$. Thus morphisms of simplicial sets $\Delta[n]\to\Delta[m]$ can be identified with morphisms $[n]\to[m]$ of $\Delta$ and vice versa.

\begin{definition} Let $X$ be a simplicial set. The \emph{category of simplices} or \emph{simplex category}
of $X$ is the comma category $\Delta/X$ whose objects are the simplices $x$ of $X$ and whose morphisms $x\to x'$ are morphisms $\theta$ of $\Delta$ such that $x=X(\theta)(x')$. Alternatively, the objects are pairs $([m],x)$, where $x\colon  \Delta[m]\rightarrow X$, and morphisms are commuting triangles,
$$
\xymatrix{\Delta[m] \ar[rr]^{\theta}\ar[rd]_{x} && \Delta[m'] \ar[ld]^{x'}\\
&X.}
$$
Given a map $f\colon X\to Y$ of simplicial sets there is a functor
$$
\Delta/f\colon \Delta/X\to \Delta/Y
$$
given by $(\Delta/f)(x)=fx$.
\end{definition}

The opposite category of the category of simplices $\Delta/X$ of $X$ can also be interpreted as the Grothendieck construction for the functor $X\colon  \Delta^{op}\rightarrow \Set$, that is, as the category
$$(\Delta/X)^\op=\int_{\Delta^{op}} X.$$
It also comes together with a natural projection functor
$$P^\op\colon  (\Delta/X)^\op=\int_{\Delta^{op}} X\rightarrow \Delta^\op,$$
which is a discrete Grothendieck fibration i.e.,\ a Grothendieck fibration where all the fibers are sets. In fact, every discrete Grothendieck fibration
$$P\colon  \Cc\rightarrow \Delta$$
can be obtained as the Grothendieck construction $\int_{\Delta} X$ of the functor
$$X\colon  \Delta^\op \rightarrow \Set,\,\, X([n])=X_n=P^{-1}([n]).$$
This gives an equivalence of categories
$$\DiscFib(\Delta)\stackrel{\simeq}\leftrightarrow \Delta^{op}\Set$$
between the category of {\em discrete}\/ Grothendieck fibrations over $\Delta$ and the category of simplicial sets, which is a very special case of the equivalence of $2$-categories between the $2$-category $\Fib(\Bb)$ of Grothendieck fibrations over a small category $\mathscr B$ and the $2$-category of contravariant pseudofuncors $\PsdFun(\Bb^{op}, \Cat)$ from $\mathscr B$ into the $2$-category $\Cat$ of small categories (see~\cite[2.3]{GNT2} and~\cite{SGA}).

Any contravariant functor from a small category $\Cc$ into the category of sets is a colimit of representable functors $\Hom_\Cc(-,c)$, and the Density Theorem \cite[Chap. III, \S 7, Thm. 1]{MacL} states that we can recover a simplicial set $X$ via the isomorphism
$$X\cong \clim\limits_{([m],x)\in\Delta/X}\Delta[m].$$

Let us here also recall that the \emph{nerve} $\Ner\Cc$ of a small category $\Cc$  is the simplicial set $\Ner\Cc_{\bullet}=\{\Ner\Cc_n\}$ whose $n$-simplices are given as
$$\Ner\Cc_n=\Hom_{\Cat}([n], \Cc).$$
In more concrete terms, an $n$-simplex $\sigma$ of $\Ner\Cc$ is just a string of  $n$ composable morphisms $\gamma_i$ in $\Cc$
$$\sigma=(C_0\stackrel{\gamma_1}\longrightarrow C_1\stackrel{\gamma_2}\longrightarrow \cdots \stackrel{\gamma_n}\longrightarrow C_n)$$
where $C_i=\sigma(i)$ are objects of $\Cc$.

The face and degeneracy maps $d_i$ and $s_j$ are then given by precomposition with the coface and codegeneracy maps $d^i$ and $s^j$. In other words, the value $d_i(\sigma)$ of the face map $d_i\colon  \Ner\Cc_n\rightarrow \Ner\Cc_{n-1}$ is obtained from $\sigma$ by omitting the object $C_i=\sigma(i)$, and by omitting $\gamma_1$ if $i=0$, composing $\gamma_{i+1}$ and $\gamma_i$ if $0<i<n$, or omitting $\gamma_n$ if $i=n$. Similarly, the value $s_j(\sigma)$ of the degeneracy map $s_j\colon  \Ner\Cc_n\rightarrow \Ner\Cc_{n+1}$ is obtained from $\sigma$ by repeating the object $C_j$ and inserting an identity morphism $id_{C_j}$.

The nerve construction defines a functor ${\mathcal N}\colon  \Cat\rightarrow \Delta^{op}\Set$ from the category of small categories to the category of simplicial sets.

We will finally define another simplex category, the simplex category of a small category.

\begin{definition}
Let $\Cc$ be a small category. The \emph{simplex category $\Delta/\Cc$ of $\Cc$} is the comma category whose objects are pairs $([m], f)$, where $[m]$ is an object of $\Delta$ and $f\colon [m]\to \Cc$ is a functor, and whose morphisms $([m],f)\longrightarrow ([n],g)$ are morphisms $\theta\colon [m]\to[n]$ of $\Delta$ with $f=g\circ \theta$.
\end{definition}

Thus objects $([m],f)$ of $\Delta/\Cc$ are elements of the simplicial nerve $\Ner \Cc$ of $\Cc$. We will often omit the $[m]$ from the notation and regard objects as diagrams or strings
$$f=(C_0\stackrel{f_1}\longrightarrow C_1\stackrel{f_2}\longrightarrow \cdots \stackrel{f_m}\longrightarrow C_m).$$
The morphisms of $\Delta/\Cc$ are as usual generated by omitting or repeating objects $C_i$ in such diagrams.

The simplex category $\Delta/\Cc$ of a small category $\Cc$ is therefore just the simplex category $\Delta/\Ner \Cc$ of the nerve $\Ner \Cc$ of $\Cc$.
It was shown by Illusie~\cite[VI.3]{I} and Latch~\cite{L} that the functor $\Delta/-\colon  \Delta^{op}\Set\rightarrow \Cat$ is in fact a weak homotopy inverse to the nerve functor $\mathcal N\colon  \Cat\rightarrow \Delta^{op}\Set$ i.e.,\ for any simplicial set $X$ there is a weak equivalence of simplicial sets $$\mathcal N(\Delta/X)\stackrel{\sim}\rightarrow X.$$

Another incarnation of the simplex category of $\Cc$ is given by the Grothendieck construction of the contravariant diagram of discrete categories given by the simplicial nerve,
$$(\Delta/\Cc)^\op\;\;\cong\;\;\int_{\Delta^\op}\Ner\Cc\qquad\text{ where }\qquad\Ner \Cc\colon \Delta^\op\to \Set\to\Cat.$$

Let $F\colon  \Cc\rightarrow \Dd$ be a functor. If $D$ is an object of $\Dd$, then the \emph{fiber $\Cc_D=F^{-1}(D)$ of $F$ over} $D$ is the subcategory of $\Cc$ whose objects are the objects $C$ of $\Cc$ such that $F(C)=D$ and whose morphisms are the morphisms $f\colon  C\rightarrow C'$ in $\Cc$ such that $F(f)=id_D$. The \emph{left fiber $\Cc/D=F/D$ of $F$ over} $D$ is the category of all pairs $(C, u)$ with $C$ an object of $\Cc$ and $u\colon  F(C)\rightarrow D$ a morphism in $\Dd$ and where a morphism $(C, u)\rightarrow (C', u')$ is given as a morphism $v\colon  C\rightarrow C'$ in $\Cc$ such that $u=u'\circ F(v)$. Dually, we have the notion of a \emph{right fiber $D/\Cc=D/F$ of $F$ over} $D$.

If $\Cc$ is a small category, then the simplex category $\Delta/\Cc$ is also given as the left fiber over $\Cc$ of the embedding $\Delta\rightarrow \Cat$.

More generally, let $\Cc$ be any category and $c$ an object of $\Cc$. Given any cosimplicial object in $\Cc$, that is, a functor $F\colon \Delta\to \Cc$,
one can define the \emph{simplex category $\Delta/c$} as the comma category whose objects are pairs $([m], f)$, where $[m]$ is an object of $\Delta$ and $f\colon F([m])\to c$ is an arrow of $\Cc$, and whose morphisms $([m],f)\longrightarrow ([n],g)$ are morphisms $\theta\colon [m]\to[n]$ of $\Delta$ with $f=g\circ F(\theta)$.
The definitions of the simplex category above are for the obvious functors $F\colon \Delta\to\Delta^{op}\Set$ and  $F\colon \Delta\to\Cat$. Note that both of these are fully faithful functors.


\subsection{Gabriel-Zisman (co)homology of simplicial sets and its functorial properties}
In this subsection we will present a systematic account of the constructions and fundamental functorial properties of cohomology and homology of simplicial sets with general coefficient systems. The coefficient systems described here were first introduced by Gabriel and Zisman~\cite[App. II.4]{GZ} to analyse the homology of simplicial sets and were also discussed systematically by Dress~\cite{Dr}. Fimmel \cite{Fi} also used these coefficient systems to construct a Verdier duality theory for sheaves on simplicial sets. As a particular application of our general framework we will show how Thomason cohomology and homology of small categories as introduced and studied by the authors in~\cite{GNT2} fits into this picture.

\begin{definition}
Let $X$ be a simplicial set and $\Mm$ be a category. A functor $T\colon  \Delta/X\rightarrow \Mm$ is called a \emph{(covariant) Gabriel-Zisman  natural system on $X$ with values in} $\Mm$.
\end{definition}

\begin{remark}\label{sheafrem}
  A  Gabriel-Zisman natural system $T$ will be termed a \emph{sheaf}  if $T(x\stackrel\theta\to x')$ is an isomorphism in $\Mm$ whenever $\theta$ is a codegeneracy map $s^i\colon[n+1]\to[n]$ in $\Delta$ (or equivalently, whenever $\theta$ is surjective, cf.\ \cite[Definition 3.2]{Fi}). 
  \end{remark}

We now define a general cohomology theory for simplicial sets using these Gabriel-Zisman natural systems as coefficients.

\begin{definition}\label{drcochn}
Let $X$ be a simplicial set and let $T\colon  \Delta/X\rightarrow \Aa$ be a Gabriel-Zisman natural system with values in a complete abelian category $\Aa$ with exact products. The \emph{Gabriel-Zisman cochain complex} $C_{GZ}^*(X, T)$ of $X$ is defined as
$$C_{GZ}^n(X, T):=\prod_{\sigma_n\in X_n} T(\sigma_n),$$
for each integer $n\geq 0$, with differential
$$d=\sum_{i=0}^{n+1}(-1)^i d^i\colon
\prod_{\sigma_n\in X_n}T(\sigma_n) \longrightarrow
\prod_{\sigma_{n+1}\in X_{n+1}}T(\sigma_{n+1}).$$
The components of these $d^i$ are the morphisms
$$\delta^i_\#\colon  T(\sigma_{n+1}\circ \delta^i)\rightarrow T(\sigma_{n+1})$$
induced by the coface maps  $\delta^i\colon [n]\to[n+1]$.
The \emph{$n$-th Gabriel-Zisman  cohomology} of $X$ is the cohomology of this cochain complex,
$$H^n_{GZ}(X, T)=H^n(C_{GZ}^*(X, T)
, d).$$
\end{definition}

Equivalently, $C_{GZ}^*(X,T)$ is the cochain complex associated to the cosimplicial object
$$
\def\cccc#1{\displaystyle\!\!\prod_{\sigma_{#1}\in X_{#1}}\!\!\!T(\sigma_{#1})}
\xymatrix @C=40pt{
\cccc0
\ar @<-.5ex>[r]_-{d^1}\ar @<.5ex>[r]^-{d^0}
&
\cccc1
\ar @/_20pt/[l]_-{s^0}
\ar @<-1ex>[r]_-{d^2}\ar[r]|-{d^1}\ar @<1ex>[r]^-{d^0}
&
\cccc2
\ar @/_20pt/[l]|{s^1}\ar @<-1ex>@/_20pt/[l]_-{s^0}
\ar @<-1.5ex>[r]
\ar @<-.5ex>[r]
\ar @<.5ex>[r]
\ar @<1.5ex>[r]
&
\cccc3
\ar @/_20pt/[l]
\ar @<-1ex>@/_20pt/[l]
\ar @<-2ex>@/_20pt/[l]
}\dots
$$
given as the cosimplicial replacement $\prod^*T$ of the functor $T\colon \Delta/X\to \Aa$
(see~\cite[XI.5]{BK},~\cite{Wei2}).

For any simplicial set $X$ and a complete abelian category $\Aa$ with exact products, let $\thomnat$ be
the category whose objects are the (covariant) Gabriel-Zisman natural systems $T\colon \Delta/X\to\Aa$ with values in $\Aa$. A morphism $(\varphi,\tau)\colon T_X\to T_Y$ between Gabriel-Zisman natural systems $\Delta/X\stackrel{T_X}\longrightarrow\Aa$ consists of a morphism $\varphi\colon Y\to X$ of simplicial sets together with  a natural transformation $\tau\colon T_X\circ\Delta/\varphi\longrightarrow T_Y$. The composition of morphisms is given by the following diagram
$$
\xymatrix @R3em@C5em{\Delta/X\drto_{T_X}&\lto_{\Delta/\varphi} \dto|(0.45){\stackrel\tau\Longrightarrow\quad T_Y\quad\stackrel\upsilon\Longrightarrow}\Delta/Y&\lto_{\Delta/\psi} \Delta/Z\dlto^{T_Z}\\&\Aa\,.}
$$
The Gabriel-Zisman cochain complex defines in fact a functor
$$C_{GZ}^*\colon \thomnat\to\cochain, \quad C^*_{GZ}(T):=C^*_{GZ}(X, T),$$
from the category of Gabriel-Zisman natural systems with values in the abelian category $\Aa$, to the category of cochain complexes in $\Aa$.
The functor $C^*_{GZ}$ is defined on objects as above, and on morphisms by
\begin{align*}C^*_{GZ}(\varphi,\tau)\colon C^*_{GZ}(X,T_X)&\longrightarrow C^*_{GZ}(Y,T_Y),\\
(a_f)_{[n]\stackrel f\to X}
&\longmapsto
(\tau_g(a_{\varphi\circ g}))_{[n]\stackrel g\to Y}.
\end{align*}
Gabriel-Zisman cohomology therefore becomes a functor from $\thomnat$ to the category of graded objects in the category $\Aa$.
In fact, the correspondence $$T\longmapsto H^*_{GZ}(X, T)$$ is a cohomological $\partial$-functor on the category $\thomnat$
of (covariant) Gabriel-Zisman natural systems.\\

Dually, we define homology of simplicial sets with coefficients in {\em contravariant}\/ Gabriel-Zisman natural systems. These coefficients are in fact the original ones used by Gabriel and Zisman~\cite[App. III.4]{GZ} and Dress~\cite{Dr}.

\begin{definition}
Let $X$ be a simplicial set and $\Mm$ be a category. A functor $T\colon  (\Delta/X)^{op}\rightarrow \Mm$ is called a \emph{(contravariant) Gabriel-Zisman  natural system on $X$ with values in} $\Mm$.
\end{definition}

Using these general coefficient systems, we define now the Gabriel-Zisman homology of a simplicial set $X$.

\begin{definition}\label{drhn}
Let $X$ be a simplicial set and let $T\colon  (\Delta/X)^{op} \rightarrow \Aa$ be a contravariant Gabriel-Zisman natural system with values in a cocomplete abelian category $\Aa$ with exact coproducts.  The \emph{Gabriel-Zisman chain complex} $C^{GZ}_*(X, T)$ of $X$ is defined as
$$C^{GZ}_n(X, T):=\bigoplus_{\sigma_n\in X_n} T(\sigma_n),$$
for each integer $n\geq 0$, with differentials
\begin{eqnarray*}
d_n\colon
\bigoplus_{\sigma_{n+1}\in X_{n+1}}
  \!\!\!\!
T(\sigma_{n+1}) &\longrightarrow &
\bigoplus_{\sigma_n\in X_n} \!\! T(\sigma_n)\\
a_f&\mapsto&\sum_{i=0}^{n+1}(-1)^i(\delta^i)^\#
(a_{f}),
\end{eqnarray*}
where $(\delta^i)^\#\colon T(\sigma_{n+1})\to T(\sigma_{n+1}\circ\delta^i)$ is induced by the coface map $\delta^i\colon [n]\to[n+1]$.
The \emph{$n$-th Gabriel-Zisman homology} of $X$ is defined as the homology of this chain complex,
$$H_n^{GZ}(X, T):=H_n(C^{GZ}_*(X, T), d).$$
\end{definition}

Again, the Gabriel-Zisman chain complex is just the chain complex corresponding to a certain  simplicial object in $\Aa$, given by the simplicial replacement of $T$.

Let $\thomcontra$ be the category with objects the \emph{contravariant} Gabriel-Zisman natural systems $T\colon (\Delta/X)^\op\to\Aa$, and in which a morphism $(\varphi,\tau)\colon X\to Y$ is given by a functor $\varphi\colon X\to Y$ together with a natural transformation $\tau\colon T_X\to T_Y\circ\Delta/\varphi$.
The composition of morphisms is described by the following diagram,
$$
\xymatrix @R3em@C5em{(\Delta/X)^{op}\drto_{T_X}\rto^{\Delta/\varphi}& \dto|(0.45){\stackrel\tau\Longrightarrow\quad T_Y\quad\stackrel\upsilon\Longrightarrow}(\Delta/Y)^{op}\rto^{\Delta/\psi}& (\Delta/Z)^{op}\dlto^{T_Z}\\&\Aa\,.}
$$
The Gabriel-Zisman chain complex defines a functor
$$C^{GZ}_*\colon \thomcontra\to \chain, \quad C_*^{GZ}(T):=C_*^{GZ}(X, T),$$
where for morphisms we define
$$C^{GZ}_*(\varphi,\tau)\colon C^{GZ}_*(X, T_X)\longrightarrow C^{GZ}_*(Y, T_Y)$$
using the maps
$$
\tau_f\colon T_X(f)\longrightarrow T_Y(\varphi\circ f).
$$
Dually, Gabriel-Zisman homology therefore defines a functor from $\thomcontra$ to the category of graded objects in $\Aa$.\\

We will give now another interpretation of Gabriel-Zisman (co)homology, which is useful for analyzing its functorial properties.

Given a cosimplicial object in an abelian category $\Aa$ i.e.,\ a functor
$$F\colon \Delta\rightarrow \Aa$$
we have the associated cochain complex $(C^*(F), d)$ of $F$ defined as
$$C^n(F)\colon = F([n])$$
for each integer $n\geq 0$, with differential
$$d=\sum_{i=0}^{n+1}(-1)^i d^i\colon  F([n])\rightarrow F([n+1]),$$
where $d^i=F(\delta^i)$ and $\delta^i\colon [n]\rightarrow [n+1] $ for $0\leq i\leq n+1$ are the respective coface maps. We can now define the $n$-th cohomology of the cosimplicial object $F$ as the cohomology of the associated cochain complex
$$H^n(F):=H^n(C^*(F), d).$$
We therefore get a sequence of functors
$$H^*=(H^n)_{n\in \nn}\colon \Fun(\Delta, \Aa)\rightarrow \Aa, \,\, F\mapsto H^n(F).$$
Now we consider the following general situation. Let $\Cc$ be a small category and $\Aa$ a complete abelian category. Given two functors $P\colon  \Cc\rightarrow \Delta$ and $T\colon \Cc\rightarrow \Aa$, we have the right Kan extension $Ran_P(T)$ of the functor $T$ along $P$ (see~\cite[Chap. X]{MacL}):
$$
\xymatrix @R4em@C5em{\Cc\dto_{T}\rto^P&
\Delta\ar @{-->}[dl]^{Ran_P(T)}\\\Aa}
$$
It is an object of the functor category $\Fun(\Delta, \Aa)$, in other words a cosimplicial object of the abelian category $\Aa$ and we define:
\begin{definition}\label{defCoRan}  Let $\Cc$ be a small category and $\Aa$ a complete abelian category.
Given two functors $P\colon  \Cc\rightarrow \Delta$ and $T\colon \Cc\rightarrow \Aa$ {\em the $n$-th cohomology of $P$ with coefficients in $T$}\/ is defined as
$$H^n(P, T):= H^n(Ran_P(T)).$$
\end{definition}

Dually, given now a simplicial object of an abelian category $\Aa$ i.e.,\ a functor
$$F\colon \Delta^{op}\rightarrow \Aa$$
we have the associated chain complex $(C_*(F), d)$ of $F$ defined as
$$C_n(F):= F([n])$$
for each integer $n\geq 0$, with differential
$$d=\sum_{i=0}^{n+1}(-1)^i d_i\colon  F([n+1])\rightarrow F([n]),$$
where $d_i=F(\delta^i)$ and $\delta^i\colon [n]\rightarrow [n+1] $ for $0\leq i\leq n+1$ are the respective coface maps. So we can define the $n$-th homology of the simplicial object $F$ as the homology of the associated chain complex
$$H_n(F):=H_n(C_*(F), d).$$
We therefore get a sequence of functors
 $$H_*=(H_n)_{n\in \nn}\colon  \Fun(\Delta^{op}, \Aa)\rightarrow \Aa, \,\, F\mapsto H_n(F).$$
Now we consider the following general situation. Let $\Cc$ be a small category and $\Aa$ a cocomplete abelian category. Given two functors $P\colon  \Cc\rightarrow \Delta$ and $T\colon \Cc^{op}\rightarrow \Aa$, we have the left Kan extension $Lan^P(T)$ of the functor $T$ along $P^{op}$ (see~\cite[Chap. X]{MacL}):
$$
\xymatrix @R4em@C5em{\Cc^{op}\dto_{T}\rto^{P^{op}}&
\Delta^{op}\ar @{-->}[dl]^{Lan^P(T)}\\\Aa}
$$
It is an object of the functor category $\Fun(\Delta^{op}, \Aa)$, in other words a simplicial object of the abelian category $\Aa$ and we define:
\begin{definition}\label{defCoLan}  Let $\Cc$ be a small category and $\Aa$ a cocomplete abelian category.
Given two functors $P\colon  \Cc\rightarrow \Delta$ and $T\colon \Cc^{op}\rightarrow \Aa$ {\em the $n$-th homology of $P$ with coefficients in $T$}\/ is defined as
$$H_n(P, T):= H_n(Lan^P(T)).$$
\end{definition}

Now we would like to interpret this general cohomology and homology as a certain Ext and Tor construction, and in order to do so recall the following constructions
(compare~\cite[1.3, Remark 1.7]{GNT2} and~\cite[X.4]{MacL}, ):

\begin{definition}Let $\Ab$ be the category of abelian groups, and $\Aa$ an additive category. Then
\begin{enumerate}
\item The category $\Aa$ is \emph{cotensored} over  $\Ab$ if there is a functor $\Hom\colon \Ab^{op}\times\Aa\to \Aa$, satisfying the natural exponential law
$$
\Hom_\Aa(a, \Hom(A, b))\cong \Hom_{\Ab}(A, \Hom_\Aa(a,b)).
$$
\item The category $\Aa$ is \emph{tensored} over $\Ab$ if there is a functor $\otimes\colon \Ab\times\Aa\to \Aa$ satisfying the natural exponential law
$$
 \Hom_\Aa(A\otimes a, b)\cong \Hom_{\Ab}(A, \Hom_\Aa(a,b)).
$$
\end{enumerate}

Let $F\colon \Cc\to\Ab$, $T\colon \Cc\to\Aa$ be diagrams over $\Cc$.
The \emph{symbolic hom} $\symbhom_{\Cc}(F,T)$ as an object of $\Aa$ is determined by
natural isomorphisms
$$
\Hom_\Aa(a,\symbhom_{\Cc}(F,T))\cong \Nat(F,\Hom_\Aa(a,T(-))).
$$
Dually, for diagrams $F\colon \Cc\to\Ab$, $T\colon \Cc^{op}\to\Aa$, the \emph{symbolic tensor product} $F\symbten_{\Cc}T$ as an object of $\Aa$ is determined by
natural isomorphisms
$$
\Hom_\Aa(F\symbten_{\Cc}T, b)\cong \Nat(F,\Hom_\Aa(T(-),b)).
$$
\end{definition}

Now let
 $\underline{\Z}\colon \Cc\to \Ab$
be the constant diagram with value $\Z$. Then following the arguments and their duals in~\cite[1.3]{GNT2} we have for any diagram $T\colon \Cc\to \Aa$ that
$$\symbhom_{\Cc}(\underline{\Z},T)\cong \lim_{\Cc}T$$
and for any diagram  $T\colon \Cc^{op}\to \Aa$ we have dually
$$\underline{\Z}\symbten_{\Cc}T\cong \clim_{\Cc^{\op}}T.$$

Recall that a \emph{resolution} of $\underline\Z$ is a functor
$B_*\colon \Delta^\op\to \Fun(\Cc,\Ab)$ such that, for each object $c$ of $\Cc$, the reduced homology groups of the complexes $B_*(c)$ are trivial.
A resolution is \emph{free} if for each $n$ the functor $B_n\colon \Cc\to\Ab$ is a coproduct of representable functors $\Z\Hom(c,-)$.

We now express the general cohomology and homology constructions introduced above as derived functors of $\lim$ and $\clim$.
Suppose that $\Aa$ is a complete abelian category, with exact products. Let $B_*$ be a \emph{free resolution} of $\underline\Z$. Then the derived functors of  $\symbhom_{\Cc}(\underline{\Z},-)\cong \lim_{\Cc}(-)$ are given by the cohomology of the following Ext complex,
$$
\Ext_{\Cc}^*(\underline\Z,-):=\;\symbhom_{\Cc}(B_*,-).
$$
Dually, suppose that $\Aa$ is a cocomplete abelian category, with exact coproducts. Then the derived functors of $\underline{\Z}\symbten_{\Cc}(-)\cong\clim_{\Cc^{op}}(-)$  are given by the homology of the following Tor complex,
$$
\Tor_*^{\Cc}(\underline{\Z}, -):=B_*\symbten_{\Cc}(-).
$$

\begin{theorem}\label{resolution}
Let $\Aa$ be an additive category.
For any functor $P\colon \Cc\to\Delta$ there exists a resolution $B_*^P$
of the constant functor $\underline\Z$ such that
\begin{enumerate}
\item
If $\Aa$ is complete and cotensored over $\Ab$, then
$$Ran_P(T)\;\cong\;\symbhom_{\Cc}(B_*^P,T)
\;\colon \Delta\to\Aa$$
natural in $T\colon \Cc\to\Aa$.
\item
If $\Aa$ is cocomplete and tensored over $\Ab$, then
$$Lan^P(T)\;\cong\;B_*^P\symbten_{\Cc}T\;\colon \Delta\to\Aa$$
natural in $T\colon \Cc^{op}\to\Aa$.
\item
If $P\colon \Cc\to\Delta$ is a discrete fibration over $\Delta$, then there is a natural isomorphism
$$
B_n^P\cong\bigoplus_{c\in\Cc:P(c)=n}\Z\Hom_{\Cc}(c,d)
$$
and hence $B_*^P$ is a free resolution of $\underline\Z$.
\end{enumerate}
\end{theorem}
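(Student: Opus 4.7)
The plan is to define the resolution explicitly by the formula
$$B_n^P(c) \;:=\; \Z\Hom_\Delta([n], P(c)),$$
with the simplicial structure in $n$ induced by precomposition with the coface and codegeneracy maps of $\Delta$. Evaluated at each object $c \in \Cc$, this is precisely the simplicial abelian group of simplicial chains on the standard simplex $\Delta[P(c)]$ (viewed as a simplicial set via the Yoneda embedding). Standard contractibility of the simplex then gives that the augmented complex
$$\cdots \to B_1^P(c) \to B_0^P(c) \xrightarrow{\epsilon} \Z \to 0$$
is acyclic for every $c$, so that $B_*^P$ is a resolution of the constant functor $\underline{\Z}$.

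For parts (1) and (2), I would apply the standard end/coend (weighted (co)limit) formulas for Kan extensions, using the (co)tensored structure of $\Aa$ over $\Ab$. For $T\colon \Cc\to\Aa$ with $\Aa$ cotensored,
$$Ran_P(T)([n]) \;\cong\; \int_{c\in\Cc} T(c)^{\Hom_\Delta([n],P(c))} \;\cong\; \int_c \Hom_{\Ab}(B_n^P(c), T(c)) \;\cong\; \symbhom_{\Cc}(B_n^P, T),$$
and dually for $T\colon \Cc^{\op}\to\Aa$ with $\Aa$ tensored,
$$Lan^P(T)([n]) \;\cong\; \int^{c\in\Cc} \Hom_\Delta([n],P(c))\cdot T(c) \;\cong\; B_n^P \symbten_{\Cc} T.$$
Both isomorphisms are natural in $T$ and in $[n]$, and hence promote from levelwise isomorphisms to isomorphisms of cosimplicial (respectively simplicial) objects of $\Aa$.

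For part (3), when $P\colon \Cc\to\Delta$ is a discrete fibration, the unique Cartesian lift property provides a natural bijection
$$\Hom_\Delta([n], P(c)) \;\cong\; \bigsqcup_{c'\colon P(c')=[n]} \Hom_{\Cc}(c', c),$$
sending $\phi\colon [n]\to P(c)$ to its unique Cartesian lift $\tilde\phi\colon c'\to c$ with $P(c')=[n]$ and $P(\tilde\phi)=\phi$. Applying the free abelian group functor then yields the required isomorphism $B_n^P(c) \cong \bigoplus_{c'\colon P(c')=[n]} \Z\Hom_{\Cc}(c',c)$, exhibiting each $B_n^P$ as a coproduct of representables and $B_*^P$ as a free resolution in this case.

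The main hurdle is really only in selecting the right definition of $B_*^P$; once the formula $B_n^P(c)=\Z\Hom_\Delta([n],P(c))$ is in hand, each of the three claims reduces to a direct application of the enriched Yoneda lemma together with the well-known contractibility of the standard simplex. The only bookkeeping subtlety is to verify that the simplicial structure on $B_*^P$ matches the cosimplicial structure of $Ran_P(T)$ coming from the variance of the comma categories $[n]\downarrow P$ (and analogously for the left Kan extension); this is immediate since both structures originate from the cosimplicial object $[n]\mapsto[n]$ in $\Delta$.
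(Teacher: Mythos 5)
Your proposal is correct and follows essentially the same route as the paper: the same choice $B_n^P(c)=\Z\Hom_\Delta([n],P(c))$, acyclicity via contractibility of the standard simplex, the end/coend formulas for the Kan extensions matched against the symbolic hom and tensor, and the unique-lifting bijection $\Hom_\Delta([n],P(c))\cong\coprod_{c':P(c')=[n]}\Hom_\Cc(c',c)$ for part (3). No gaps to report.
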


\begin{proof}
We set $B_*^P=\Z\Hom_\Delta(-, P(-))\colon \Delta^\op\to\Fun(\Cc,\Ab)$ and observe that the functor $B_*^P(d)=\Z\Hom_\Delta(-, P(d))\colon \Delta^\op\to\Ab$ is contractible since it is the standard simplex of dimension $P(d)$. We therefore have a resolution of $\underline\Z$. The natural isomorphisms of (1) and (2) now follow by expressing the Kan extensions and symbolic hom and tensor functors in terms of (co)ends:
\begin{align*}
Ran_P(T)&\cong
\int_{d\in \Cc}
\Hom(\Z\Hom_\Delta(-, P(d)),\,T(d))
=
\int_{d\in \Cc} \Hom(B_*^P(d),T(d))
\\&
\qquad\qquad\qquad\qquad\qquad\qquad\qquad\qquad\qquad\qquad
\cong \symbhom_{\Cc}(B_*^P,T),\\
Lan^P(T)&\cong
\int^{d\in \Cc^\op}\Z\Hom_{\Delta^\op}(P^\op(d),-)\otimes T(d)
=
\int^{d\in \Cc^\op}B_*^P(d)\otimes T(d)
\\&
\qquad\qquad\qquad\qquad\qquad\qquad\qquad\qquad\qquad\qquad
\cong B_*^P\symbten_{\Cc}T.
\end{align*}
If $P\colon \Cc\to\Delta$ is a discrete fibration over $\Delta$ there is a natural bijection
$$\Hom_\Delta(n,P(d))\cong \coprod_{c\in\Cc:P(c)=n} \Hom_\Cc(c,d)$$
for each $n\geq0$ and each object $d$ of $\Cc$. Thus
$$
B_n^P(d)\;=\;\Z\Hom_\Delta(n,P(d))
\;\cong\;\bigoplus_{c\in\Cc:P(c)=n}\Z\Hom_{\Cc}(c,d)
$$
and therefore the resolution $B_*^P\colon  \Delta^\op\to \Fun(\Cc,\Ab)$ of $\underline\Z$ is free.
\end{proof}
The following is then immediate:
\begin{corollary}
  Let $\Cc$ be a small category and  $\Aa$ an additive category, and let $P\colon \Cc\to\Delta$ be a discrete fibration.
  \begin{enumerate}
  \item
    If $\Aa$ is complete, with exact products, and $T\colon \Cc\to \Aa$ a functor,
    then the cohomology groups of $P$ with coefficients in $T$ are derived functors,
    $$H^n(P, T)=H^n(Ran_P(T))\cong \Ext_{\Cc}^n(\underline{\Z}, T)\cong\llim^n_{\Cc}T=H^n(\Cc, T).$$
  \item
       If $\Aa$ is cocomplete, with exact coproducts,  and $T\colon \Cc^\op\to \Aa$ a functor,
    then the homology groups of $P$ with coefficients in $T$ are derived functors,
    $$H_n(P, T)=H_n(Lan^P(T))\cong \Tor_{\Cc}^n(\underline{\Z}, T)\cong\clim_n^{\Cc}T=H_n(\Cc, T).$$
\end{enumerate}
\end{corollary}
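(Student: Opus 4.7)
The plan is to deduce the corollary essentially for free from Theorem~\ref{resolution} combined with the Ext/Tor description of the derived functors of $\lim$ and $\clim$ recalled just above it.

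First I would invoke part~(3) of Theorem~\ref{resolution}: the hypothesis that $P\colon \Cc\to\Delta$ is a discrete fibration is used precisely to upgrade the canonical resolution $B_*^P$ to a coproduct of representables $\Z\Hom_\Cc(c,-)$, making it a \emph{free} resolution of $\underline{\Z}$ in $\Fun(\Cc,\Ab)$. Since representable functors are projective in $\Fun(\Cc,\Ab)$ by Yoneda, $B_*^P$ is in particular a projective resolution, and hence is admissible for computing derived functors of both $\symbhom_\Cc(\underline{\Z},-)$ and $\underline{\Z}\symbten_\Cc(-)$.

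For part~(1), I would then assemble the chain
\[
H^n(P,T) \;=\; H^n(Ran_P(T)) \;\cong\; H^n\bigl(\symbhom_\Cc(B_*^P, T)\bigr) \;=\; \Ext^n_\Cc(\underline{\Z}, T),
\]
where the first equality is Definition~\ref{defCoRan}, the middle isomorphism is Theorem~\ref{resolution}(1) applied naturally in $T$, and the last equality is the definition of $\Ext$ via the projective resolution $B_*^P$ together with the hypothesis that $\Aa$ has exact products (so that $\symbhom_\Cc(-,T)$ is exact on coproducts of representables). Combining this with the identification $\symbhom_\Cc(\underline{\Z},-)\cong \lim_\Cc(-)$ recalled before the theorem yields the common value $\llim^n_\Cc T$, which is by definition the classical small-category cohomology $H^n(\Cc, T)$. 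Part~(2) follows by the dual chain: Definition~\ref{defCoLan} and Theorem~\ref{resolution}(2) give $H_n(P,T) \cong H_n(B_*^P\symbten_\Cc T)$, which by projectivity of $B_*^P$ and the exact coproducts hypothesis equals $\Tor_n^\Cc(\underline{\Z},T)$, and the isomorphism $\underline{\Z}\symbten_\Cc(-)\cong \clim_{\Cc^{op}}(-)$ identifies this with $\clim_n^\Cc T = H_n(\Cc,T)$.

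There is essentially no substantive obstacle, since the heavy lifting has already been done in Theorem~\ref{resolution}. The only point worth flagging is exactly where the discrete fibration hypothesis enters: without it, the resolution $B_*^P$ is merely contractible objectwise and the complexes $\symbhom_\Cc(B_*^P,T)$ and $B_*^P\symbten_\Cc T$ compute the Kan extensions $Ran_P(T)$ and $Lan^P(T)$ but not the derived functors of $\lim_\Cc$ and $\clim_{\Cc^{op}}$; it is precisely the freeness (hence projectivity) provided by part~(3) that licenses the identification with Ext and Tor, and thus with the classical (co)homology of $\Cc$.
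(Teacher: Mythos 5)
Your proposal is correct and follows exactly the route the paper intends: the paper states the corollary as ``immediate'' from Theorem~\ref{resolution}, and your chain of identifications (Definition~\ref{defCoRan}, Theorem~\ref{resolution}(1)--(2), freeness of $B_*^P$ from part~(3), and the identification of $\Ext/\Tor$ with derived (co)limits via the preceding discussion) is precisely the intended unwinding. Your remark pinpointing where the discrete-fibration hypothesis enters is accurate and consistent with the paper's setup.
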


As noted earlier, the discrete fibrations $P\colon \Cc\to\Delta$ are just given as the projections $P_X\colon \Delta/X\to \Delta$ from the simplex category of a simplicial set $X$.

\begin{theorem}\label{GZExt} Let $X$ be a simplicial set and let $T\colon  \Delta/X\rightarrow \Aa$ be a Gabriel-Zisman natural system with values in a complete abelian category $\Aa$ with exact products. The cohomology of $P_X\colon \Delta/X\to \Delta$ coincides with the Gabriel-Zisman cohomology of $X$,
 $$
C^*(P_X,T)\cong C_{GZ}^*(X,T),\qquad
H^*(P_X,T)\cong H_{GZ}^*(X,T),\qquad
$$
and Gabriel-Zisman cohomology may be identified as a derived functor,
$$H_{GZ}^n(X, T)\cong \Ext_{\Delta/X}^n(\underline{\Z}, T)\cong\llim^n_{\Delta/X}T=H^n(\Delta/X, T).$$
\end{theorem}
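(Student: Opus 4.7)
The plan is to apply Theorem~\ref{resolution} directly to $P = P_X\colon \Delta/X \to \Delta$, which is a discrete fibration as noted in the paragraph preceding the statement. Parts~(1) and~(3) of that theorem produce a free resolution $B_*^{P_X}$ of $\underline\Z$ with
$$B_n^{P_X} \;\cong\; \bigoplus_{\sigma_n \in X_n}\Z\Hom_{\Delta/X}(\sigma_n, -),$$
and identify the right Kan extension as $Ran_{P_X}(T) \cong \symbhom_{\Delta/X}(B_*^{P_X}, T)$ as cosimplicial objects of $\Aa$.

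First I would compute this cosimplicial object levelwise. Since $\symbhom_{\Delta/X}(-, T)$ converts coproducts to products and satisfies the Yoneda-type identity $\symbhom_{\Delta/X}(\Z\Hom_{\Delta/X}(c, -), T) \cong T(c)$ (both properties are immediate from the defining universal property of $\symbhom$), levelwise evaluation yields
$$C^n(P_X, T) \;=\; Ran_{P_X}(T)([n]) \;\cong\; \prod_{\sigma_n \in X_n} T(\sigma_n) \;=\; C_{GZ}^n(X, T).$$

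Next I would match the coface maps. A coface $\delta^i\colon [n]\to[n+1]$ acts on $B_*^{P_X} = \Z\Hom_\Delta(-, P_X(-))$ by precomposition in the first variable; tracing through the Yoneda identification, this precomposition picks out, for each $\sigma_{n+1} \in X_{n+1}$, the face $\sigma_{n+1}\circ\delta^i \in X_n$ together with the canonical morphism $\sigma_{n+1}\circ\delta^i \to \sigma_{n+1}$ in $\Delta/X$ represented by $\delta^i$. Applying $T$ yields exactly the component map $\delta^i_\#$ of Definition~\ref{drcochn}. Taking the alternating sum therefore gives an isomorphism of cochain complexes $C^*(P_X, T) \cong C_{GZ}^*(X, T)$, and passing to cohomology gives $H^*(P_X, T) \cong H_{GZ}^*(X, T)$. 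The identification as a derived functor, $H_{GZ}^n(X, T) \cong \Ext_{\Delta/X}^n(\underline\Z, T) \cong \llim^n_{\Delta/X} T$, is then immediate from the Corollary preceding the statement.

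The main obstacle I foresee is the careful bookkeeping of the coface maps in the third paragraph: the content is not deep, but one must verify that the cosimplicial structure on $\symbhom_{\Delta/X}(B_*^{P_X}, T)$ induced by the contravariant dependence of $\Z\Hom_\Delta(-, P_X(-))$ on its first variable really does translate, through Yoneda and the product formula, into the explicit component maps $\delta^i_\#$ appearing in the Gabriel-Zisman differential. All remaining ingredients are already packaged in Theorem~\ref{resolution} and its Corollary.
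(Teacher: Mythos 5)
Your proposal is correct and follows essentially the same route as the paper: both rest on applying Theorem~\ref{resolution} to the discrete fibration $P_X\colon\Delta/X\to\Delta$ and then identifying $Ran_{P_X}(T)$ with the Gabriel--Zisman cochain complex, with the derived-functor statement coming from the preceding Corollary. The only difference is that where the paper cites \cite[App.~II.4]{GZ} to recognise $Ran_{P_X}(T)$ as the cosimplicial replacement $\prod^*T$, you carry out that identification (including the coface maps) directly from the explicit free resolution $B_*^{P_X}$ via the Yoneda and product properties of $\symbhom$ --- a correct and slightly more self-contained verification of the same step.
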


\begin{proof} From~\cite[Appendix II.4]{GZ}, it follows that the right Kan extension $Ran_{P_X}(T)$ of $T$ along the forgetful functor $P_X\colon \Delta/X\to \Delta$,
$$
\xymatrix @R4em@C5em{\Delta/X\dto_{T}\rto^{P_X}&
\Delta\ar @{-->}[dl]^{\,\,\, Ran_{P_X}(T)=\prod^*T}\\\Aa}
$$
is precisely the cosimplicial replacement $\prod^*T$ of the functor $T\colon \Delta/X\to \Aa$.

So we apply the above Theorem~\ref{resolution} to the right Kan extension $Ran_{P_X}(T)$ and use the identification of the cosimplicial replacement  $\prod^*T$ with the Gabriel-Zisman cochain complex $C_{GZ}^*(X,T)$ as constructed above to get the desired isomorphisms.

The last isomorphism is just the usual identification of the derived functors of the limit functor $\llim^n_{\Delta/X}T$ with the cohomology of the category $\Delta/X$ with coefficients in $T$ (see~\cite{Q,Ro} or~\cite{GNT}).
\end{proof}

Dually, we also have a similar isomorphism for Gabriel-Zisman homology of simplicial sets.

\begin{theorem}\label{GZTor} Let $X$ be a simplicial set and let $T\colon  (\Delta/X)^{op}\rightarrow \Aa$ be a Gabriel-Zisman natural system with values in a cocomplete abelian category $\Aa$ with exact coproducts. The homology of $P_X\colon  \Delta/X\rightarrow X$ coincides with the Gabriel-Zisman homology of $X$,
$$
C_*(P_X,T)\cong C^{GZ}_*(X,T),\qquad
H_*(P_X,T)\cong H^{GZ}_*(X,T),\qquad
$$
and Gabriel-Zisman homology may be identified as a derived functor,
$$H^{GZ}_n(X, T)\cong \Tor^{(\Delta/X)^{op}}_n(\underline{\Z}, T)\cong \clim_n^{(\Delta/X)^{op}}T=H_n((\Delta/X)^{op}, T).$$
\end{theorem}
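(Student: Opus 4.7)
The plan is to dualize the argument of Theorem~\ref{GZExt} step by step. First I would identify the left Kan extension $Lan^{P_X}(T)$ of the contravariant coefficient system $T$ along $P_X^{op}\colon (\Delta/X)^{op}\to\Delta^{op}$ with the \emph{simplicial replacement} $\coprod_* T$ of $T$ in the sense of Bousfield-Kan~\cite[XII.5]{BK}. The coend formula for the left Kan extension, combined with the fact that $P_X$ is a discrete fibration (so that for each $[n]$ the fiber decomposes as a coproduct indexed by $X_n$), yields
\[
Lan^{P_X}(T)([n])\;\cong\;\bigoplus_{\sigma_n\in X_n} T(\sigma_n),
\]
which is exactly the $n$-th term of the Gabriel-Zisman chain complex of Definition~\ref{drhn}. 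This identification is essentially the dual of the argument in~\cite[Appendix II.4]{GZ} used in the proof of Theorem~\ref{GZExt}, and is indeed the content of~\cite[Appendix III.4]{GZ}.

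Next I would verify that the alternating-sum differential of the chain complex associated to the simplicial object $Lan^{P_X}(T)\colon \Delta^{op}\to\Aa$ agrees with the Gabriel-Zisman differential. The face operators on the simplicial replacement are induced by the coface maps $\delta^i$, sending a summand $T(\sigma_{n+1})$ to $T(\sigma_{n+1}\circ\delta^i)$ via $(\delta^i)^{\#}$. This recovers exactly the formula in Definition~\ref{drhn}, so $C_*(P_X,T)\cong C^{GZ}_*(X,T)$ and hence $H_*(P_X,T)\cong H^{GZ}_*(X,T)$.

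With these identifications in hand, the derived-functor description follows by applying Theorem~\ref{resolution}(2) and (3) to $P_X$. Since $P_X$ is a discrete fibration over $\Delta$, the resolution $B_*^{P_X}$ is a \emph{free} resolution of $\underline{\Z}$, and the natural isomorphism
\[
Lan^{P_X}(T)\;\cong\;B_*^{P_X}\symbten_{(\Delta/X)^{op}}T
\]
identifies the Gabriel-Zisman chain complex with the complex computing the symbolic $\Tor$ introduced prior to Theorem~\ref{resolution}. The final isomorphism with $\clim_n^{(\Delta/X)^{op}}T = H_n((\Delta/X)^{op},T)$ is then the standard description of derived colimits as the homology of a small category with coefficients in a contravariant functor, as used in~\cite{GNT}.

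The main obstacle will be bookkeeping the variance carefully: because $T$ is contravariant on $\Delta/X$, one must work with $P_X^{op}$ throughout, ensure that the coend over $(\Delta/X)^{op}$ really produces the coproduct indexed by $X_n$ (and not by morphisms into $X$ from some auxiliary object), and check that the sign and face-map conventions in Definition~\ref{drhn} match the canonical chain complex associated to a simplicial object. Once this variance check is made, each remaining step is a verbatim dualization of the cohomological case.
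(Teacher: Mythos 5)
Your proposal is correct and follows essentially the same route as the paper: the paper's proof simply cites \cite[Proposition 4.2]{GZ} and notes that one can ``argue dually along the same lines as in the proof of Theorem~\ref{GZExt}'' using the symbolic tensor product and its derived Tor, which is precisely the dualization you carry out in detail (identifying $Lan^{P_X}(T)$ with the simplicial replacement via the coend formula and the discrete-fibration decomposition, then invoking Theorem~\ref{resolution}(2),(3)). The only cosmetic quibble is that the paper writes the symbolic tensor as $B_*^{P_X}\symbten_{\Delta/X}T$ rather than with the subscript $(\Delta/X)^{op}$, but this does not affect the argument.
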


\begin{proof} This is basically~\cite[Proposition 4.2]{GZ}. Alternatively, we can argue dually along the same lines as in the proof of Theorem~\ref{GZExt} using the resolution of the constant functor  $\underline{\Z}$ involving the dual notions, namely the symbolic tensor product functor and its derived Tor-functor for contravariant Gabriel-Zisman natural systems $T\colon  (\Delta/\Cc)^{op}\rightarrow \Aa$.
\end{proof}

Let us now look at several examples to illustrate the broad realm of applications and the necessity for the use of general Gabriel-Zisman natural systems as cohomological coefficient systems in contrast to more specialised coeffcients. The Leray type spectral sequences constructed in the following sections will then provide useful computational tools in all these frameworks of examples.

\begin{example}[Thomason (co)homology of categories]
We can interpret Thomason (co)homology of categories as introduced by the authors in~\cite{GNT2} both in terms of Gabriel-Zisman (co)homology of simplicial sets and via (co)homology of Kan extensions.

Let $\Cc$ be a (small) category, $\Aa$ be a complete abelian category with exact products and $T\colon  \Delta/\Cc\rightarrow \Aa$  a (covariant) Thomason natural system. From the general discussions above we see immediately that there are natural isomorphisms
$$H_{Th}^*(\Cc, T)\cong H^*(P_{\Cc}, T)\cong H^*_{GZ}(\Ner\Cc, T),$$
where $P=P_\Cc\colon  \Delta/\Cc\rightarrow \Delta$ is the forgetful functor and by identifying the categories of simplices $\Delta/\Cc=\Delta/\Ner\Cc$, where $\Ner\Cc$ is the nerve of the category $\Cc$. Here the notion of a Gabriel-Zisman natural system $T\colon \Delta/\Ner\Cc\to\Aa$ coincides with that of a Thomason natural system as we can readily identify the simplex category $\Delta/\Cc$ of $\Cc$ with the category of simplices $\Delta/\Ner\Cc$ over the simplicial nerve of $\Cc$ (see~\cite{GNT2}).

Dually, if $\Aa$ is a cocomplete abelian category with exact coproducts and given a (contravariant) Thomason natural system $T\colon  (\Delta/\Cc)^{op}\rightarrow \Aa$ we have natural isomorphisms
$$H^{Th}_*(\Cc, T)\cong H_*(P_{\Cc}, T)\cong H_*^{GZ}(\Ner\Cc, T).$$
As discussed in detail in~\cite{GNT} and~\cite{GNT2}, Thomason (co)homology generalises all the other (co)homology theories for small categories in the literature, including Baues-Wirsching and Hochschild-Mitchell (co)homology (compare for example~\cite{BW,CR,Mi,pr,Q}). Therefore the functoriality properties of these (co)homology theories are direct consequences of those of Gabriel-Zisman (co)homology as discussed above.
\end{example}

\begin{example}[Sheaves on topological spaces]\label{ExSheaves}
Let $X_{\bullet}$ be a simplicial set. We have the geometric realisation functor
$$|\hspace*{0.2cm}|\colon \Delta^{op}\Set\rightarrow \Top$$
given on objects as a coend or colimit as follows (see \cite{GZ,BK})
$$|X_{\bullet}|=\int^{[n]} X_n\times \standardsimplex^n=\clim \bigg (\coprod_{[n]\rightarrow [m]} X_m\times \standardsimplex^n \rightrightarrows \coprod_{[n]} X_n\times \standardsimplex^n\bigg )$$
where $\standardsimplex^n$ is the topological standard $n$-simplex in $\R^{n+1}$. We have $\standardsimplex^n=|\Delta[n]|$. It turns out that $|X_{\bullet}|$ is a compactly generated Hausdorff topological space. Let $R$ be a noetherian ring and $\Shv(|X_{\bullet}|)$ be the abelian category of sheaves of $R$-modules over $|X_{\bullet}|$. Let $\sigma\in X_n$ i.e., $\sigma\in \Hom_{\Delta^{op}\Set} (\Delta[n], X_{\bullet})$. We get an induced continuous map
$$|\sigma|\colon \standardsimplex^n\rightarrow |X_{\bullet}|.$$
Let $\mathcal{F}\in \Shv(|X_{\bullet}|)$ be a sheaf on $|X_{\bullet}|$ and assume that the inverse image sheaf $|\sigma|^*\mathcal{F}$ is constant on the subset $\mathrm{inn}(\standardsimplex^n)$ of inner points of the topological space $\standardsimplex^n$ for every simplex $\sigma\in X/\Delta$. Let $\mathcal{F}_{\sigma}$ denote the stalk of $\mathcal{F}$ at such an inner point. Then
$$F\colon \Delta/X_{\bullet}\rightarrow R-\Mod, \,\,\, F(\sigma)=\mathcal{F}_{\sigma}$$
is a Gabriel-Zisman (covariant) natural system, which in fact is a sheaf and Gabriel-Zisman cohomology $H^*_{GZ}(X_{\bullet}, F)$ gives sheaf cohomology $H^*(|X_{\bullet}|, \mathcal{F})$ of the topological space $|X_{\bullet}|$. In fact, when starting with a general Gabriel-Zisman (covariant) natural system on $\Delta/X_{\bullet}$, geometric realisation always produces a sheaf on $|X_{\bullet}|$ and defines a left exact functor from the category $\thomnat$ of (covariant) natural systems to the category 
$\Shv(|X_{\bullet}|)$ of sheaves on the topological space $|X_{\bullet}|$ (see \cite[Prop. 3.1]{Fi}).
\end{example}

\begin{example}[Parshin-Beilinson adeles of schemes]
Let $X$ be a noetherian scheme and $\Qcoh(X)$ denote the abelian category of quasi-coherent $\mathcal{O}_X$-modules. 
Furthermore let $P(X)$ be the set of points of the scheme $X$. Let $S_{\bullet}(X)$ be the associated simplicial set of flags of irreducible closed subschemes of $X$, ordered by inclusion, given as follows: consider the set of points $P(X)$ of $X$ with the partial order $\geq$ on $P(X)$ defined by $\eta\geq \nu$ if $\nu\in \overline{\{\eta\}}$. Then $S_{\bullet}(X)$ is the simplicial nerve of the partially ordered set $(P(X), \geq)$, with the set of $n$-simplices
$$S(X)_n=\{(\nu_0, \nu_1, \ldots, \nu_n) | \nu_i\in P(X);\, \nu_i\geq \nu_{i+1}\}$$
and the usual face and degeneracy maps $d_i$ and $s_i$ for $0\leq i \leq n$ induced from the partially ordered set structure.
If $X$ is an affine scheme, the flags of $S(X)_{\bullet}$ are just sequences of prime ideals ordered by inclusions. 
Beilinson \cite{Be} (see also \cite{Hu} for more details) constructed for any $K\subset S(X)_{\bullet}$ and any quasi-coherent sheaf
$\mathcal{F}$ on $X$ a space of adeles $\A(K, \mathcal{F})$ which is an abelian group functorial in $\mathcal{F}$. 
Then the groups of local adeles $\A(\{\sigma\}, \mathcal{F})$, for any simplex $\sigma\in S(X)_{\bullet}$,
give rise to a Gabriel-Zisman (covariant) natural system by setting (compare \cite{Hu,Fi})
$$F\colon \Delta/{S(X)_{\bullet}}\rightarrow \Ab, \,\,\, F(\sigma)=\A(\{\sigma\}, \mathcal{F}),$$
which actually is a sheaf and we have that (see \cite[Prop. 2.1.4]{Hu})
$$\A(K, \mathcal{F})\subset \prod_{\sigma\in K}\A(\{\sigma\}, \mathcal{F}).$$
In particular, we can consider the abelian group of $n$-dimensional adeles of $X$ with coefficients in $\mathcal{F}$ defined as
$$\A^n(X, \mathcal{F})= \A(S(X)_n, \mathcal{F}).$$ 
It turns out that the sequence of groups of global adeles $\A^n(X, \mathcal{F})$ on $X$ gives a cosimplicial abelian group $\A^{\bullet}(X, \mathcal{F})$ and therefore a cochain complex.  Its cohomology, which is the Gabriel-Zisman cohomology $H^*_{GZ}(S(X)_{\bullet}, F)$ for $S_{\bullet}(X)$ calculates sheaf cohomology i.e., if $\mathcal{F}$ is a quasi-coherent $\mathcal{O}_X$-module, then we have an isomorphism \cite[Thm 4.2.3]{Hu}
$$H^*(\A^{\bullet}(X, \mathcal{F}))\cong H^*(X, \mathcal{F}).$$
Parshin \cite{Pa} gave first a definition of adeles for smooth proper algebraic surfaces over a perfect field, which was later extended by Beilinson \cite{Be} for arbitrary noetherian schemes.
\end{example}

\begin{example} [Buildings of reductive algebraic groups]
Let $G$ be a reductive algebraic group over the finite field $\F_q$ and $\Rep(G)$ be the category of finite dimensional representations of the finite group of $\F_q$-rational points $G(\F_q)$. Associated to $G$ is a simplicial set $\Delta(G)_{\bullet}$, the combinatorial building of $G$ consisting of inclusion chains in the poset of subgroups of G given by parabolic
subgroups. For any simplex $\sigma\in \Delta(G)_{\bullet}$ we have a parabolic subgroup $P_{\sigma}\subset G$.  Let $R_u(P)$ be the unipotent radical of a parabolic subgroup $P$ and $R_u(P)(\F_q)$ its group of $\F_q$-rational points. Let $M\in \Rep(G)$, then we obtain a (covariant) Gabriel-Zisman natural system by setting
$$F\colon \Delta/{\Delta(G)_{\bullet}}\rightarrow \Rep(G), \,\,\, F(\sigma)= M^{R_u(P_{\sigma})}$$
and inclusion maps for different simplices. Here $M^{R_u(P_{\sigma})}\subset M$ and $F$ turns out to be again a sheaf (see \cite{Fi}) and Gabriel-Zisman cohomology $H^*_{GZ}(\Delta(X)_{\bullet}, F)$ gives the cohomology of the building with coefficients being representations as sheaves on the building (compare \cite{Br,SS}).

\end{example}

\begin{remark}[Higher categories]
  As mentioned above, Gabriel-Zisman cohomology extends and unifies many notions of cohomology of categories. Recall that the factorisation category (also known as the twisted arrow category)
      $\mathfrak F\mathit{act}(\Cc)$ of a category $\Cc$ has objects the morphisms  $f\colon x\to y$ of $\Cc$
and arrows
$(h,k)\colon f\to f'$, where $f'=kfh$ in $\Cc$. Then
  Baues-Wirsching cohomology $H^{*}(\Cc,D)$ was defined in~\cite{BW}, for natural systems of coefficients $D\colon \mathfrak F\mathit{act}(\Cc)\to\Ab$. The relation to Thomason cohomology arises from the existence of a functor
$$
\eta_{\mathscr C}\colon \Delta/\mathscr C\to
\mathfrak F\mathit{act}(\mathscr C)
  $$
  from the category of simplices to the factorisation category of $\mathscr C$, see~\cite{GNT2}.
  We remark that analogous notions will provide extensions of
  Thomason and Baues-Wirsching cohomologies to:
  \begin{itemize}
    \item \emph{2-categories.} One can define a category of simplices $\Delta/\mathscr D$ of a 2-category $\mathscr D$, with objects $a$ given by the lax functors $a\colon [m]\to \mathscr D$ and arrows $a\to b$ given by morphisms $\sigma\colon [m]\to [n]$ of $\Delta$, where $a=b\sigma$. One can also define a factorisation category  
      $\mathfrak F\mathit{act}(\mathscr D)$, with objects the 1-morphisms  $f\colon x\to y$ of $\mathscr D$
and arrows
$(h,k,\xi)\colon f\to f'$,
where  $\xi\colon kfh\to f'$ is a 2-morphism of $\mathscr D$.
      Furthermore we can give a natural transformation
    \[
\eta_{\mathscr D}\colon \Delta/\mathscr D\to
\mathfrak F\mathit{act}(\mathscr D).
    \]
      We can define notions of Thomason and Baues-Wirsching cohomologies for 2-categories $\mathscr D$, with coefficient systems on
      $\Delta/\mathscr D$ and on
      $\mathfrak F\mathit{act}(\mathscr D)$ respectively.
      
    \item \emph{2-Segal spaces}, also known as \emph{decomposition spaces}~\cite{DK,Moe1}. The 2-Segal condition specifies a particular class of simplicial sets more general than nerves of ordinary categories, which are characterised by the 1-Segal condition.
      It was shown recently, in \cite{boors-proc-ams}, that a simplicial set $X$ is 2-Segal if and only if its \emph{edgewise subdivision} is 1-Segal, and we denote the category defined by this edgewise subdivision by $\mathfrak F\mathit{act}(X)$. If $X$ is 1-Segal this agrees with the definition of the category of factorisations above.
      We can define notions of Thomason and Baues-Wirsching cohomologies for 2-Segal spaces $X$,  with coefficient systems on the categories
      $\Delta/X$ and
$\mathfrak F\mathit{act}(X)$ respectively, related once more by a natural transformation
    \[
\eta_{X}\colon \Delta/X\to
\mathfrak F\mathit{act}(X).
    \]
  \end{itemize}
There is also an obvious notion of cohomology of $\infty$-categories: if we model an $\infty$-category by a quasi-category, that is, by an inner-Kan simplicial set, then we can take its Gabriel-Zisman cohomology. We do not see an analogue of Baues-Wirsching cohomology for $\infty$-categories. 
\end{remark}

\section{Spectral sequences for Gabriel-Zisman (co)homology}

\subsection{(Co)homology spectral sequences for maps of simplicial sets}
In this subsection we will derive Leray type Gabriel-Zisman (co)homology spectral sequences for a given map of simplicial sets. In order to do so, we will work first in a more suitable general categorical setting.

Let $\Cc$ and $\Dd$ be small categories, $\Aa$ a complete abelian category and $T\colon  \Cc\rightarrow \Aa$ be a functor. Now let us assume that we also have a functor $u\colon  \Cc\rightarrow \Dd$ together with functors $P\colon  \Cc\rightarrow \Delta$ and $Q\colon  \Dd\rightarrow \Delta$ such that
$P=Q\circ u$ i.e.,\ we have a commutative diagram of the form
$$
\xymatrix{
{\Cc}\ar[rr]^{u}\ar[rd]_P && {\Dd}\ar[ld]^Q   \\ 
&\Delta &}
$$
inducing a commutative diagram between functor categories, where the respective functors are given by precomposition and right Kan extensions
$$
\xymatrix{
*+++{\Fun(\Cc,\Aa)}\ar @<-5pt>[rrrr]_{Ran_u} \ar @<5pt>[rrdd]^-{\hspace*{0.4cm}Ran_P}
&&&& *+++{\Fun(\Dd,\Aa)} \ar @<-5pt>[llll]_{u^*}   \ar @<5pt>[lldd]^-{Ran_Q} \\ \\
&&\Fun(\Delta,\Aa)\ar @<5pt>[rruu]^-{Q^*} \ar @<5pt>[lluu]^-{P^*}
}
$$

It follows immediately from Definition~\ref{defCoRan} and the above that we have an isomorphism
$$H^*(P, T)\cong H^*(Q, Ran_u(T)).$$

From the previous diagram we get now the following Grothendieck composite functor spectral sequence~\cite{G} (compare also~\cite{An,GNT,GNT2}).

\begin{theorem}\label{GrCFSS}
Let $\Cc$ and $\Dd$ be small categories and $T\colon  \Cc\rightarrow \Aa$ be a functor to a complete abelian category. Let $u\colon  \Cc\rightarrow \Dd$ be a functor together with functors $P\colon  \Cc\rightarrow \Delta$ and $Q\colon  \Dd\rightarrow \Delta$ such that $P=Q\circ u$. Then there is a spectral sequence:
$$E_2^{p, q}\cong H^p(Q, Ran^q_u(T))\Rightarrow H^{p+q}(P, T),$$
which is natural in $u$ and $T$ and where $Ran^q_u(T)$ denotes the $q$-th right satellite of $Ran_u(T)$.
\end{theorem}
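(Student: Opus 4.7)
The plan is to obtain the spectral sequence as an instance of Grothendieck's composite functor spectral sequence \cite{G} applied to the composition of left exact functors
$$
\Fun(\Cc, \Aa) \xrightarrow{\,Ran_u\,} \Fun(\Dd, \Aa) \xrightarrow{\,\lim_\Dd\,} \Aa,
$$
whose composite is $\lim_\Cc$. Indeed, right Kan extensions compose, so $P = Q\circ u$ yields $Ran_P \cong Ran_Q\circ Ran_u$; equivalently, the adjunction $u^*\dashv Ran_u$ applied to the constant diagrams gives $\lim_\Dd\circ Ran_u \cong \lim_\Cc$.

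The next step is to verify the hypotheses of Grothendieck's theorem. The functor $Ran_u$ is left exact as a right adjoint, and preserves injectives because its left adjoint $u^*$ (precomposition with $u$) is exact; in particular it sends injectives to $\lim_\Dd$-acyclic objects. Both $\lim_\Dd$ and $Ran_u$ are left exact, and the category $\Fun(\Cc, \Aa)$ has enough injectives since $\Aa$ is complete with exact products. Grothendieck's theorem then produces a spectral sequence
$$
E_2^{p, q} \;\cong\; \lim\nolimits^p_\Dd\bigl(Ran^q_u(T)\bigr)\;\Longrightarrow\; \lim\nolimits^{p+q}_\Cc(T),
$$
natural in $T$ (and, via the standard functoriality of the Grothendieck construction in the composable pair of functors, natural also in $u$). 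By definition, the right satellites $R^q Ran_u$ are precisely $Ran^q_u$.

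The remaining step is to translate this into the form claimed, via the identifications $H^n(P, -) \cong \lim^n_\Cc$ and $H^n(Q, -) \cong \lim^n_\Dd$. In the setting of primary interest throughout the paper, namely $P$ and $Q$ discrete fibrations (as in Theorem~\ref{GZExt}), the resolutions $B_*^P$ and $B_*^Q$ of Theorem~\ref{resolution} are free, so by the corollary there $H^n(P, T) \cong \Ext^n_\Cc(\underline\Z, T) \cong \lim^n_\Cc T$, and likewise for $Q$, and the translation is immediate.

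The principal technical obstacle I foresee lies in this last translation: the identifications $H^n(P, -) \cong \lim^n_\Cc$ and $H^n(Q, -) \cong \lim^n_\Dd$ are automatic for discrete fibrations (to which all applications later in the paper reduce, since $P_X\colon \Delta/X \to \Delta$ is always a discrete fibration), but in full generality the statement is best read with $H^p(Q, -)$ and $H^{p+q}(P, -)$ interpreted as the right derived functors of $\lim_\Dd$ and $\lim_\Cc$ respectively, in line with Definitions~\ref{defCoRan} and~\ref{defCoLan}.
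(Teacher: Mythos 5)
Your proposal follows essentially the same route as the paper: the paper obtains the theorem by invoking Grothendieck's composite functor spectral sequence \cite{G} (in the form of the Andr\'e spectral sequence of \cite{An,GNT}) for the factorisation induced by $P=Q\circ u$, exactly as you do, and the identifications $H^n(P,-)\cong\lim^n_{\Cc}$ and $H^n(Q,-)\cong\lim^n_{\Dd}$ you rely on are the ones supplied by Theorem~\ref{resolution} and its corollary. Your closing caveat is fair but does not separate the two arguments: the paper glosses over the same point, and only ever applies the theorem to discrete fibrations such as $P_X\colon \Delta/X\to\Delta$, where the translation is exact.
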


Dually, using analogue constructions as just described, we obtain also a homology version of the above spectral sequence

\begin{theorem}\label{GrHFSS}
Let $\Cc$ and $\Dd$ be small categories and  $T\colon  \Cc^{op}\rightarrow \Aa$ be a functor to a cocomplete abelian category. Let $u\colon  \Cc\rightarrow \Dd$ be a functor together with functors $P\colon  \Cc\rightarrow \Delta$ and $Q\colon  \Dd\rightarrow \Delta$ such that $P=Q\circ u$. Then there is a spectral sequence:
$$E^2_{p, q}\cong H_p(Q, Lan_q^u(T))\Rightarrow H_{p+q}(P, T),$$
which is natural in $u$ and $T$ and where $Lan_q^u(T)$ denotes the $q$-th left satellite of $Lan^u(T)$.
\end{theorem}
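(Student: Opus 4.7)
The plan is to derive this statement as the homological dual of Theorem~\ref{GrCFSS}, invoking Grothendieck's composite-functor spectral sequence~\cite{G} in its left-derived form. The starting observation is that $P = Q\circ u$ yields, via the universal property of left Kan extensions, a natural isomorphism of simplicial objects $Lan^P(T)\cong Lan^Q(Lan^u(T))$ for every $T\colon\Cc^{op}\to\Aa$. Equivalently, one obtains the homological counterpart of the commutative square of functor categories displayed just before Theorem~\ref{GrCFSS}, with the precomposition functors $u^*$, $Q^*$, $P^*$ now acting as the \emph{right} adjoints and the left Kan extensions $Lan^u$, $Lan^Q$, $Lan^P$ as the \emph{left} adjoints.

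The second step is to derive this composition. Both $Lan^u$ and $Lan^Q$ are right exact, being left adjoints, so Grothendieck's composite-functor theorem applies in principle. By the terminology fixed in the statement, the left satellites of $Lan^u$ are the $Lan_q^u$, while by Definition~\ref{defCoLan} the left satellites of the composite $X\mapsto H_0(Q,X) = \clim^{\Dd^{op}}Lan^Q(X)$ identify with the functors $H_p(Q,-)$. Granting the acyclicity hypothesis, Grothendieck's machine then produces
$$E^2_{p,q}\;=\;H_p(Q, Lan_q^u(T))\;\Rightarrow\;H_{p+q}(Lan^Q\circ Lan^u(T))\;=\;H_{p+q}(P,T),$$
with naturality in $u$ and $T$ inherited from that of Kan extensions and of Grothendieck's construction.

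The main obstacle is the standard acyclicity hypothesis of the composite-functor theorem: $Lan^u$ must carry sufficiently many projectives in $\Fun(\Cc^{op},\Aa)$ to $H_*(Q,-)$-acyclic objects. Using the projective class of coproducts of representables, this reduces to verifying that the representables $\Hom_\Dd(-,u(c))$ are acyclic for $H_*(Q,-)$, since $Lan^u$ carries $\Hom_\Cc(-,c)$ to $\Hom_\Dd(-,u(c))$. As a self-contained alternative bypassing this verification, I would construct the spectral sequence directly from a double complex: choose a projective resolution $T_\bullet\to T$ in $\Fun(\Cc^{op},\Aa)$ and form $B_\bullet^P\symbten_\Cc T_\bullet$, where $B_\bullet^P$ is the resolution of Theorem~\ref{resolution}. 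Since $P = Q\circ u$ implies $B_\bullet^P = u^*B_\bullet^Q$, the hom--tensor identity $u^*G\symbten_\Cc T'\cong G\symbten_\Dd Lan^u(T')$ rewrites this double complex as $B_\bullet^Q\symbten_\Dd Lan^u(T_\bullet)$, and the two standard filtration spectral sequences of this double complex then yield the claimed convergence to $H_{p+q}(P,T)$ with nontrivial $E_2$-page equal to $H_p(Q, Lan_q^u(T))$.
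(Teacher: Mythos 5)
Your main argument --- factoring $Lan^P\cong Lan^Q\circ Lan^u$ from $P=Q\circ u$ and invoking the Grothendieck composite-functor (Andr\'e) spectral sequence --- is exactly the paper's route: the authors assert the theorem dually to Theorem~\ref{GrCFSS} with references to \cite{G}, \cite{An} and \cite{GNT} and give no further detail, so your explicit discussion of the acyclicity hypothesis is a fleshing-out rather than a deviation. The one caveat concerns your self-contained alternative: for a general cocomplete abelian category $\Aa$ the functor category $\Fun(\Cc^{op},\Aa)$ need not have enough projectives, so the resolution $T_\bullet\to T$ you propose may not exist; the paper's framework deliberately avoids this by resolving only the constant functor $\underline\Z$ in $\Fun(\Cc,\Ab)$ (Theorem~\ref{resolution}) and deriving on that side via the symbolic tensor product, which is also how the satellites $Lan_q^u$ should be interpreted in this generality.
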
 

We will now derive general Leray type spectral sequences for Gabriel-Zisman (co)homology for any map of simplicial sets using the machinery developed above. In special cases, we can in addition also simplify them by using concrete fiber data.
Let us first introduce the following general constructions:

\begin{definition}
Given a map of simplicial sets $f\colon X\to Y$, the {\em fiber functor}
$$
F_{(-)}\colon \Delta/Y\to\Delta^\op\Set
$$
is defined as follows: 

For each object $y\colon \Delta[n]\to Y$
of the simplex category $\Delta/Y$, let $F_y$ be the  fiber of $f$ over $y$, which is the simplicial set
$$
F_y\;\;=\;\;\Delta[n]\times_Y X
\;\;=\;\;\{(\sigma,x)\in\Delta[n]\times X:y\circ\sigma=f(x)\},
$$
given by the pullback
\begin{equation}\label{pbs}
\vcenter{\xymatrix @R-1ex@C+2em{F_y\drpullback\ar[r]^{\bar y}\ar[d]&X\ar[d]^f\\
\Delta[n]\ar[r]_-y&Y.}
}\end{equation}

For each morphism from $y\colon \Delta[n]\to Y$ to $y'\colon \Delta[n']\to Y$, given by $\theta\colon \Delta[n]\to\Delta[n']$ and satisfying $y=y'\circ\theta$, let $F_\theta$ be the simplicial map given as:
$$
\theta\times_Y X\,\colon\;F_y\to F_{y'},\quad (\sigma,x)\mapsto (\theta\circ\sigma,x).
$$
\end{definition}
\begin{remark}

Given a simplicial map $f\colon X\to Y$ and a Gabriel-Zisman natural system $T\colon \Delta /X\to\Aa$ we have induced (covariant) natural systems $T_y$ on the fibers $F_y$, for each object $y\in \Delta/Y$, defined by
$$
T_y=T\circ\Delta/\bar y\colon \Delta/F_y\to \Delta/X\to \Aa.
$$
For each $q\geq0$ we get functors
$$
\h^q_{GZ}(F_{(-)},T_{(-)})
\colon (\Delta/Y)^\op\to \Aa
$$
defined on objects by
$$
y\mapsto H^q_{GZ}(F_{y},T_{y})
$$ 
and on morphisms $\theta$ from $y$ to $y'$ by
$$
\theta^*\colon H^q_{GZ}(F_{y'},T_{y'})\to H^q_{GZ}(F_{y},T_{y}).
$$
since $T_y=\theta^*T_{y'}$.

Dually, given a Gabriel-Zisman natural system  $T\colon (\Delta /X)^\op\to\Aa$ we have induced (contravariant) natural systems $T_y$, and for each $q\geq 0$ get functors
$$
\h_q^{GZ}(F_{(-)},T_{(-)})
\colon \Delta/Y\to \Aa
$$
defined on objects by
$$
y\mapsto H_q^{GZ}(F_{y},T_{y})
$$ 
and on morphisms $\theta$ from $y$ to $y'$ by
$$
\theta_*\colon H_q^{GZ}(F_{y},T_{y})\to H_q^{GZ}(F_{y'},T_{y'}).
$$
\end{remark}

We now make the following definition:

\begin{definition} Let $f\colon  X\rightarrow Y$ be a map of simplicial sets and $T\colon  \Delta/Y\rightarrow \Aa$ a (covariant) Gabriel-Zisman natural system. The map $f$ is called \emph{locally cohomologically constant} if for each morphism $\theta\colon  y\rightarrow y'$ of the simplex category $\Delta/Y$ the induced map in cohomology
$$\theta^*\colon H^q_{GZ}(F_{y'},T_{y'})\stackrel{\cong}\to H^q_{GZ}(F_{y},T_{y})$$
is an isomorphism.
\end{definition}

Let $f\colon  X\rightarrow Y$ be a map of simplicial sets and $T\colon  \Delta/Y\rightarrow \Aa$ a (covariant) Gabriel-Zisman natural system. From the pullback square \eqref{pbs} and functoriality of Gabriel-Zisman cohomology we get an induced map in cohomology
$$H_{GZ}^*(\Delta[n], T_{\Delta[n]})\rightarrow H^*_{GZ}(F_y, (f^*T)_y),$$
where for a given simplex $y\colon \Delta[n]\rightarrow Y$ of $Y$ we let $T_{\Delta[n]}\colon  \Delta /\Delta[n]\rightarrow \Delta /Y \rightarrow \Aa$ be the restricted Gabriel-Zisman natural system and
$(f^*T)_y=(f^*T)\circ \Delta/\bar y\colon  \Delta/F_y\to \Delta/X\to \Aa$ the induced Gabriel-Zisman natural system. We make the following definition:

\begin{definition}
Let $f\colon  X\rightarrow Y$ be a map of simplicial sets and $T\colon  \Delta/Y\rightarrow \Aa$ a (covariant) Gabriel-Zisman natural system.
The map $f$ is called \emph{locally cohomologically trivial} if for every simplex $y\colon \Delta[n]\rightarrow Y$ of $Y$ the induced map in cohomology
$$H_{GZ}^*(\Delta[n], T_{\Delta[n]})\stackrel{\cong}\rightarrow H^*_{GZ}(F_y, (f^*T)_y)$$
is an isomorphism.
\end{definition}

The following useful lemma gives an alternative description of the fiber of a general map of simplicial sets. 

\begin{lemma}\label{GZfib}
Let $f\colon  X\rightarrow Y$ be a map of simplicial sets. The simplex category of a fiber $F_y$ is naturally isomorphic to the left fiber of $\Delta/f\colon \Delta/X\to\Delta/Y$ over the object $y$,
$$
\Delta/F_y\;\cong\;(\Delta/f)/y.
$$
\end{lemma}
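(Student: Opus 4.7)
The plan is to unwind both sides of the claimed isomorphism by a direct application of the universal property of the pullback square \eqref{pbs}, and then to exhibit a strict isomorphism of categories rather than just an equivalence. The strategy is: first identify the objects on both sides, then the morphisms, then check that composition and identities match.

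On the left, an object of $\Delta/F_y$ is by definition a simplex $\Delta[m]\to F_y$. By the universal property of the pullback square \eqref{pbs} defining $F_y=\Delta[n]\times_Y X$, such a simplex is the same datum as a pair of maps $\sigma\colon\Delta[m]\to\Delta[n]$ and $x\colon\Delta[m]\to X$ satisfying $y\circ\sigma=f\circ x$. On the right, an object of the left fiber $(\Delta/f)/y$ is a pair $(x,u)$ where $x\colon\Delta[m]\to X$ is an object of $\Delta/X$ and $u$ is a morphism $(\Delta/f)(x)=f\circ x \to y$ in $\Delta/Y$; unravelling the definition of $\Delta/Y$, the morphism $u$ is precisely a map $\theta\colon\Delta[m]\to\Delta[n]$ of $\Delta$ with $y\circ\theta=f\circ x$. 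Thus the two descriptions of objects coincide under the assignment $(\sigma,x)\leftrightarrow(x,\sigma)$.

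Next, I would compare morphisms. A morphism in $\Delta/F_y$ from $(\sigma,x)\colon\Delta[m]\to F_y$ to $(\sigma',x')\colon\Delta[m']\to F_y$ is a map $\tau\colon[m]\to[m']$ in $\Delta$ making the triangle over $F_y$ commute, which by the pullback property is equivalent to $\sigma=\sigma'\circ\tau$ and $x=x'\circ\tau$. A morphism in $(\Delta/f)/y$ from $(x,\theta)$ to $(x',\theta')$ is a morphism $\tau$ of $\Delta/X$ from $x$ to $x'$ (that is, $x=x'\circ\tau$) such that the triangle in $\Delta/Y$ commutes, namely $\theta=\theta'\circ\tau$. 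These two conditions on $\tau$ are identical, so the correspondence on objects extends to a bijection on hom-sets. Preservation of identities and composition is then automatic because in both categories morphisms are represented by the same arrows $\tau$ of $\Delta$, composed in $\Delta$.

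The main potential pitfall is notational bookkeeping between simplices viewed as elements of $X_m$, as maps $\Delta[m]\to X$, and as objects of $\Delta/X$, together with the two slightly different conventions used in the paper for $\Delta/X$ (as a comma category of simplices versus as the subcategory of $\Delta\downarrow X$). I would fix one convention at the outset, namely objects as maps $\Delta[m]\to X$ and morphisms as triangles over $X$, so that the universal property of \eqref{pbs} applies directly. Once this is done, the verification is a one-line application of that universal property, and the isomorphism is clearly natural in $y$ with respect to morphisms $\theta\colon y\to y'$ of $\Delta/Y$, recovering in particular the functoriality of $F_{(-)}$ and the functors $\h^q_{GZ}(F_{(-)},T_{(-)})$ and $\h_q^{GZ}(F_{(-)},T_{(-)})$ used in the sequel.
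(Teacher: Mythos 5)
Your proof is correct and follows essentially the same route as the paper's: both unwind the objects and morphisms of $(\Delta/f)/y$ and of $\Delta/F_y$ and identify them via the universal property of the pullback square \eqref{pbs}. The paper presents the identification diagrammatically rather than spelling out the hom-set bijection, but the argument is the same.
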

\begin{proof}
An object of the left fiber of $\Delta/f$ over $y$ is just a map $\sigma\colon (\Delta/f)(x)\to y$ in the comma category $\Delta/Y$, for some $x\in\Delta/X$, as in the following diagram:
$$
\xymatrix @R-2.2ex@C+5ex{\Delta[m]\ar[r]^-{x}\ar[rdd]|{(\Delta/f)(x)}\ar[dd]_{\sigma}&X\ar[dd]^f\\ \\\Delta[n]\ar[r]_-y&Y.}
$$
Such a diagram may alternatively be interpreted as a map $(\sigma,x)
\colon \Delta[m]\to \Delta[n]\times_Y X$, and hence as an object of the category $\Delta/F_y$. 

Now a morphism in $(\Delta/f)/y$ is just a map $\theta\colon \Delta[m]\to\Delta[m']$ which fits into a diagram of the form
$$
\xymatrix @R-2.2ex@C+5ex{\Delta[m]\ar[dr]|\theta\ar @/^2ex/[drr]^(0.76)x\ar @/_1ex/[dddr]_(0.6)\sigma\\&\Delta[m']\ar[r]_-{x'}\ar[dd]_{\sigma'}&X\ar[dd]^f\\ \\&\Delta[n]\ar[r]_-y&Y.}
$$
This may be interpreted as a morphism $\theta\colon (\sigma,x)\to(\sigma',x')$ in $\Delta/F_y$.
\end{proof}

Now given any map $f\colon  X\rightarrow Y$ of simplicial sets we can derive a general cohomology spectral sequence, which compares the Gabriel-Zisman cohomology of $X$ and $Y$.

\begin{theorem}\label{H^*GZ} 
Let $X$ and $Y$ be simplicial sets and $f\colon  X\rightarrow Y$ be a map of simplicial sets. Let $\Aa$ be a complete abelian category with exact products. Given a Gabriel-Zisman natural system $T\colon  \Delta/X\rightarrow \Aa$ on $X$, there is a cohomology spectral sequence
$$E_2^{p,q}\cong H_{GZ}^p(Y, (R^q (\Delta/f)_*)(T))\Rightarrow H^{p+q}_{GZ}(X, T)$$
which is natural in $f$ and $T$ and where $R^q (\Delta/f)_*=Ran^q_{\Delta/f}$ is the $q$-th right satellite of the right Kan extension $Ran_{\Delta/f}$ along the induced functor  $\Delta/f\colon \Delta/X\to\Delta/Y$ between the simplex categories.
\end{theorem}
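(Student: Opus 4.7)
The plan is to obtain this as a direct corollary of the Grothendieck composite functor spectral sequence (Theorem~\ref{GrCFSS}), applied to the commutative triangle of forgetful projections
\[
\xymatrix{
\Delta/X \ar[rr]^-{\Delta/f} \ar[rd]_-{P_X} && \Delta/Y \ar[ld]^-{P_Y} \\
& \Delta &
}
\]
First I would verify the triangle commutes: for an object $x\colon \Delta[m]\to X$ of $\Delta/X$ the functor $\Delta/f$ returns $f\circ x\colon \Delta[m]\to Y$, which still has domain $[m]$; hence $P_Y\circ(\Delta/f)=P_X$ on objects. On morphisms, both $P_X$, $P_Y$ and $\Delta/f$ act by sending an arrow $\theta$ of $\Delta$ to $\theta$ itself, so commutativity on morphisms is automatic.

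Next, setting $\Cc=\Delta/X$, $\Dd=\Delta/Y$, $u=\Delta/f$, $P=P_X$ and $Q=P_Y$ in Theorem~\ref{GrCFSS} yields a spectral sequence
\[
E_2^{p,q}\;\cong\; H^p\bigl(P_Y,\, Ran^q_{\Delta/f}(T)\bigr)\;\Longrightarrow\; H^{p+q}(P_X, T),
\]
natural in $\Delta/f$ and in $T$. Observe in passing that the $q$-th right satellite $Ran^q_{\Delta/f}(T)$ is indeed an object of $\Fun(\Delta/Y,\Aa)$, i.e.\ a Gabriel-Zisman natural system on $Y$, which is exactly what is needed for the $E_2$-term to make sense as Gabriel-Zisman cohomology of $Y$.

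Finally I would translate both sides back to Gabriel-Zisman cohomology by invoking Theorem~\ref{GZExt}. Since $P_X\colon \Delta/X\to\Delta$ and $P_Y\colon \Delta/Y\to\Delta$ are both discrete Grothendieck fibrations (as recalled in Section~1.1), Theorem~\ref{GZExt} gives natural isomorphisms
\[
H^{p+q}(P_X, T)\;\cong\; H^{p+q}_{GZ}(X, T),\qquad H^p\bigl(P_Y, Ran^q_{\Delta/f}(T)\bigr)\;\cong\; H^p_{GZ}\bigl(Y, Ran^q_{\Delta/f}(T)\bigr).
\]
Unfolding $R^q(\Delta/f)_*:=Ran^q_{\Delta/f}$ yields exactly the stated spectral sequence, with naturality in $f$ and $T$ inherited from Theorem~\ref{GrCFSS}.

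There is really no serious obstacle here: the argument is a formal assembly, and the nontrivial homological content has already been packaged into Theorems~\ref{GrCFSS} and~\ref{GZExt}. The only point that deserves a line of verification is that the triangle of forgetful projections commutes and that both $P_X$ and $P_Y$ qualify as discrete fibrations so that Theorem~\ref{GZExt} applies on both sides.
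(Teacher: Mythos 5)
Your proposal is correct and follows essentially the same route as the paper: apply the Grothendieck composite functor spectral sequence (Theorem~\ref{GrCFSS}) to the commuting triangle $P_X = P_Y\circ(\Delta/f)$ of forgetful projections, then identify both the abutment and the $E_2$-coefficients as Gabriel-Zisman cohomology via Theorem~\ref{GZExt}. The extra checks you flag (commutativity of the triangle, and that $Ran^q_{\Delta/f}(T)$ lands in $\Fun(\Delta/Y,\Aa)$) are sensible but routine, and the paper treats them as implicit.
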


\begin{proof}
Let $X$ and $Y$ be simplicial sets, $f\colon  X\rightarrow Y$ be a map of simplicial sets and $\Aa$ be a complete abelian category with exact products. 
With the categories $\Cc=\Delta/X$ and $\Dd=\Delta/Y$ and the functors $P=P_X\colon  \Delta/X\rightarrow \Delta$, $Q=Q_Y\colon  \Delta/Y\rightarrow \Delta$
and $u=\Delta/f\colon  \Delta/X\rightarrow \Delta/Y$ we are exactly in the situation of Theorem~\ref{GrCFSS}, with $P=Q\circ u$ and we get the following commutative diagram:
$$
\xymatrix{
*+++{\Fun(\Delta/X,\Aa)}\ar @<-5pt>[rrrr]_{Ran_{\Delta/f}} \ar @<5pt>[rrdd]^-{Ran_{P_X}}
&&&& *+++{\Fun(\Delta/Y,\Aa)} \ar @<-5pt>[llll]_{(\Delta/f)^*}   \ar @<5pt>[lldd]^-{Ran_{Q_Y}} \\ \\
&&{\Fun(\Delta, \Aa)}\ar @<5pt>[rruu]^-{Q_Y^*} \ar @<5pt>[lluu]^-{P_X^*}
}
$$

Therefore, Theorem~\ref{GrCFSS} gives a spectral sequence of the form
$$E_2^{p, q}\cong H^p(Q_Y, Ran^q_u(T))\Rightarrow H^{p+q}(P_X, T),$$
which is natural in $u$ and $T$.

Identifying the above cohomologies of the functors $Q_Y$ and $P_X$ as Gabriel-Zisman cohomology following Theorem~\ref{GZExt} we get the desired spectral sequence of the form
$$E_2^{p,q}\cong H_{GZ}^p(Y, (R^q (\Delta/f)_*)(T))\Rightarrow H^{p+q}_{GZ}(X, T).$$
and the naturality of the spectral sequence with respect to $f$ and $T$ follows directly from the above identifications.
\end{proof}

\begin{remark} 
If we start with a Gabriel-Zisman natural system which is actually a sheaf (see Remark \ref{sheafrem}), then the above spectral  sequence corresponds to the Leray spectral sequence for sheaf cohomology, in fact, if applying the geometric realisation functor as in Example \ref{ExSheaves} we will obtain the classical Leray spectral sequence for sheaf cohomology of a continuous map of topological spaces.
\end{remark} 

We can identify the $E_2$-term of the spectral sequence further by relating the satellites of the right Kan extension to derived limit data of the fiber of the simplicial map $f$.

\begin{corollary}\label{CorGZcoh}
Let $X$ and $Y$ be simplicial sets and $f\colon  X\rightarrow Y$ be a map of simplicial sets. Let $\Aa$ be a complete abelian category with exact products.  Let $T\colon  \Delta/X\rightarrow \Aa$ be a Gabriel-Zisman natural system on $X$.
Then there is a cohomology spectral sequence of the form
$$E_2^{p,q}\cong H_{GZ}^p(Y,  \h^q_{GZ}(-/(\Delta/f), T\circ Q^{(-)}))\Rightarrow H^{p+q}_{GZ}(X, T)$$
which is natural in $f$ and $T$ and where  
$$\h^q_{GZ}(-/(\Delta/f), T\circ Q^{(-)})= \llim^q_{-/(\Delta/f)}(T\circ Q^{(-)})\colon  \Delta/Y\rightarrow \Aa.$$
\end{corollary}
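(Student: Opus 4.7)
The plan is to start from the spectral sequence of Theorem~\ref{H^*GZ}, whose $E_2$-term is $H^p_{GZ}(Y, R^q(\Delta/f)_*(T))$, and to rewrite the satellites $R^q(\Delta/f)_*(T)$ pointwise as derived limits over the left fibers of $\Delta/f$. The first ingredient is the classical pointwise formula for right Kan extensions (see~\cite[Chap.~X]{MacL}): for any $y\in\Delta/Y$,
$$
(Ran_{\Delta/f}T)(y)\;\cong\;\llim_{y/(\Delta/f)} T\circ Q^y,
$$
where $Q^y\colon y/(\Delta/f)\to \Delta/X$ is the forgetful projection sending an object $(x,\Delta/f(x)\to y)$ to $x$. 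This isomorphism is natural in $T$ and in $y$.

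The key step is to promote this identification to the derived level, establishing
$$
\bigl(R^q(\Delta/f)_*T\bigr)(y)\;\cong\;\llim^q_{y/(\Delta/f)}T\circ Q^y\;=\;\h^q_{GZ}(y/(\Delta/f),T\circ Q^{y}).
$$
The cleanest route is to observe that evaluation at a fixed $y$ is an exact functor on $\Fun(\Delta/Y,\Aa)$, so both sides are the $q$-th right derived functors of the common degree-zero functor $T\mapsto\llim_{y/(\Delta/f)}T\circ Q^y$ as $\partial$-functors on $\Fun(\Delta/X,\Aa)$. The universal property of derived functors then extends the degree-zero isomorphism uniquely to all $q\ge0$. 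Equivalently, one may pick an injective resolution $T\to I^\bullet$, evaluate $Ran_{\Delta/f}$ pointwise at $y$ using the limit formula, and take cohomology, using that the restriction along $Q^y$ preserves the relevant acyclicity.

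Substituting this into Theorem~\ref{H^*GZ} directly yields the displayed spectral sequence
$$
E_2^{p,q}\;\cong\;H^p_{GZ}\bigl(Y,\;\h^q_{GZ}(-/(\Delta/f),T\circ Q^{(-)})\bigr)\;\Rightarrow\;H^{p+q}_{GZ}(X,T),
$$
with naturality in $f$ and $T$ inherited from Theorem~\ref{H^*GZ} together with the naturality of the pointwise Kan extension formula in both variables. The main obstacle I anticipate is the rigorous justification that the pointwise formula commutes with the passage to satellites; conceptually this is transparent via the universality of $\partial$-functors, but a direct verification requires checking that applying $Ran_{\Delta/f}$ to an injective resolution of $T$ still computes the derived limit on each left fiber, a point which ultimately reduces to exactness of the evaluation functors and the pointwise nature of the Kan extension.
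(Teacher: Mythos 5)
Your overall route is the same as the paper's: take the spectral sequence of Theorem~\ref{H^*GZ} and rewrite the coefficients $R^q(\Delta/f)_*(T)$ pointwise as derived limits over the comma categories of $\Delta/f$. The paper does not prove this identification itself; it simply invokes \cite[Corollary 1.3]{GNT} (an Andr\'e/Cisinski-type statement) and is done. You instead try to establish the identification directly, and that is where the one genuine gap sits. Your $\partial$-functor argument shows that $T\mapsto (R^q(\Delta/f)_*T)(y)$ is a universal (effaceable) $\partial$-functor, since evaluation at $y$ is exact and the satellites vanish on injectives. But to conclude that it coincides with $T\mapsto \llim^q_{y/(\Delta/f)}(T\circ Q^{(y)})$ it is not enough that the two agree in degree zero: you must also show the latter $\partial$-functor is effaceable, i.e.\ that for $I$ injective in $\Fun(\Delta/X,\Aa)$ the restriction $I\circ Q^{(y)}$ is $\llim$-acyclic. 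This does \emph{not} ``reduce to exactness of the evaluation functors and the pointwise nature of the Kan extension''; it is a substantive lemma (one standard proof shows that $Q^{(y)*}$ preserves injectives because its left adjoint $Lan_{Q^{(y)}}$ is exact, using that the relevant comma categories decompose as coproducts of categories with terminal objects). This is precisely the content delegated to \cite[Corollary 1.3]{GNT}, so either cite it or supply that lemma.

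Two smaller points. First, your parenthetical description of $y/(\Delta/f)$ has the arrows backwards: for the pointwise formula for the \emph{right} Kan extension one needs the right fiber, whose objects are pairs $(x,\,y\to(\Delta/f)(x))$, not $(x,\,(\Delta/f)(x)\to y)$ (the latter computes $Lan$ and appears in the dual, homological statement); your displayed formula is correct, so this is only a slip in the prose. Second, the naturality claim is fine and matches the paper, provided the degree-zero pointwise isomorphism and its derived extension are checked to be natural in $y$ as well as in $T$, which the universal-$\partial$-functor argument does give you once the effaceability point above is settled.
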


\begin{proof}
For each simplex $y$ of $\Delta/Y$, let $Q^{(y)}\colon y/(\Delta/f)\to \Delta/X$ be the forgetful functor
and denote by $\h^q_{GZ}(y/(\Delta/f), T\circ Q^{(y)})$ the derived limit
$${\lim}^q\left( y/(\Delta/f)\stackrel{Q^{(y)}}\longrightarrow 
\Delta/X\stackrel{T}\longrightarrow \Aa\right).$$
Using~\cite[Corollary 1.3]{GNT} allows us to
identify the terms in the $E_2$-page of 
the spectral sequence in Theorem~\ref{H^*GZ} as
$$
E_2^{p,q}\cong H_{GZ}^p(Y,  \h^q_{GZ}(-/(\Delta/f), T\circ Q^{(-)})).
$$
and in addition gives us the desired abutment. 
\end{proof}

As a direct consequence, we also have the following general statement for locally cohomologically trivial maps of simplicial sets. This can be seen as a cohomological analogue of Quillen's Theorem A  (see also~\cite{C,Q}) for Gabriel-Zisman cohomology

\begin{proposition} 
Let $f\colon  X\rightarrow Y$ be a map of simplicial sets and $T\colon  \Delta/Y\rightarrow \Aa$ a (covariant) Gabriel-Zisman natural system. If $f$ is locally cohomologically trivial, then $f$ induces an isomorphism in cohomology:
$$H^*_{GZ}(Y, T)\stackrel{\cong}\rightarrow H^*_{GZ}(X, f^*T).$$
\end{proposition}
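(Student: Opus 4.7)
The plan is to apply the Leray-type spectral sequence of Theorem~\ref{H^*GZ} to the map $f$ with coefficients $f^*T\colon \Delta/X\to\Aa$, and to show that the hypothesis of local cohomological triviality forces the $E_2$-page to collapse to the edge, yielding the desired isomorphism.

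Concretely, Theorem~\ref{H^*GZ} supplies a spectral sequence
$$
E_2^{p,q}\cong H^p_{GZ}\bigl(Y,\,(R^q(\Delta/f)_*)(f^*T)\bigr)\;\Rightarrow\;H^{p+q}_{GZ}(X,\,f^*T).
$$
The next step is to identify the stalks of the satellites $R^q(\Delta/f)_*(f^*T)$. By Corollary~\ref{CorGZcoh} the stalk at $y\colon \Delta[n]\to Y$ is the $q$-th derived limit of $f^*T\circ Q^{(y)}$ over the comma category built from $y$ and $\Delta/f$; Lemma~\ref{GZfib} identifies this comma category with the simplex category $\Delta/F_y$ of the fibre $F_y=\Delta[n]\times_YX$, under which the composite $f^*T\circ Q^{(y)}$ becomes the induced natural system $(f^*T)_y\colon\Delta/F_y\to\Aa$. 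Hence
$$
(R^q(\Delta/f)_*)(f^*T)(y)\;\cong\;H^q_{GZ}\bigl(F_y,\,(f^*T)_y\bigr).
$$

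Now I would invoke the hypothesis that $f$ is locally cohomologically trivial to identify the right-hand side with $H^q_{GZ}(\Delta[n],T_{\Delta[n]})$. Since $\Delta/\Delta[n]$ has $id_{\Delta[n]}$ as a terminal object, its higher derived limits vanish and the zeroth one is simply the value of $T_{\Delta[n]}$ at this terminal object, namely $T(y)$. Naturally in $y$, the $E_2$-page is therefore concentrated on the row $q=0$ with $E_2^{p,0}\cong H^p_{GZ}(Y,T)$ and $E_2^{p,q}=0$ for $q>0$, the $q=0$ identification being given by the unit of the adjunction $(\Delta/f)^{*}\dashv Ran_{\Delta/f}$. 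The spectral sequence collapses to the edge isomorphism $H^p_{GZ}(Y,T)\cong H^p_{GZ}(X,f^*T)$; tracing the identifications confirms that this isomorphism is the one induced by $f$.

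The step I expect to be most delicate is the pointwise identification of the stalks of the derived right Kan extension with the cohomology of the fibres: one must reconcile the right-fibre comma category $y/(\Delta/f)$ appearing in Corollary~\ref{CorGZcoh} with the left-fibre description $(\Delta/f)/y\cong \Delta/F_y$ of Lemma~\ref{GZfib}, and verify that the induced coefficient systems correspond under this matching. Once this pointwise identification is secured, the collapse of the spectral sequence, and hence the isomorphism, is a formal consequence of the hypothesis.
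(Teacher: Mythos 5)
Your overall strategy---running the spectral sequence of Theorem~\ref{H^*GZ} for $f$ with coefficients $f^*T$ and collapsing it to the bottom row---is not the paper's, and it contains a step that is false. You claim that because $\Delta/\Delta[n]$ has $id_{\Delta[n]}$ as a \emph{terminal} object, the higher derived limits over it vanish and the zeroth one is the value $T(y)$ there. It is \emph{initial} objects that make limits (and their derived functors) collapse to an evaluation; a terminal object does nothing of the sort. Indeed $\Delta/\Delta[0]\cong\Delta$, and $\llim^q_{\Delta}$ of a functor $\Delta\to\Aa$ is the cohomology of the associated cochain complex, which is in general nonzero for $q>0$; so $H^q_{GZ}(\Delta[n],T_{\Delta[n]})$ does not vanish in positive degrees for an arbitrary Gabriel--Zisman natural system. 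Consequently the $E_2$-page is not concentrated in the row $q=0$, there is no edge isomorphism, and the argument breaks down. This also shows you have misread the force of the hypothesis: ``locally cohomologically trivial'' does not assert that the fibres are acyclic, only that their cohomology agrees with that of the corresponding standard simplices, which may itself be nontrivial. A second, independent problem is the stalk identification $(R^q(\Delta/f)_*)(f^*T)(y)\cong H^q_{GZ}(F_y,(f^*T)_y)$ that you yourself flag as delicate: the derived right Kan extension is computed over the right fibres $y/(\Delta/f)$, whereas Lemma~\ref{GZfib} identifies $\Delta/F_y$ with the \emph{left} fibre $(\Delta/f)/y$; these are genuinely different categories, and the paper only reconciles the two descriptions under extra hypotheses (locally cohomologically constant maps, or invertible/local coefficient systems), none of which are available here.

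The paper's proof avoids both traps. It applies the naturality of the spectral sequence of Theorem~\ref{H^*GZ} to the commutative square comparing $f\colon X\to Y$ with $id\colon Y\to Y$, obtaining a morphism of spectral sequences $E_r^{*,*}(id,T)\to E_r^{*,*}(f,f^*T)$. The locally-cohomologically-trivial hypothesis is exactly the statement that this morphism is an isomorphism on $E_2$-pages (fibrewise it is the comparison map $H^*_{GZ}(\Delta[n],T_{\Delta[n]})\to H^*_{GZ}(F_y,(f^*T)_y)$), hence an isomorphism of abutments $H^*_{GZ}(Y,T)\cong H^*_{GZ}(X,f^*T)$. Neither spectral sequence needs to degenerate, and no identification of the satellites with fibre cohomology is required. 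To salvage your route you would have to both establish that identification and restrict to the special case where $H^q_{GZ}(\Delta[n],T_{\Delta[n]})$ vanishes for $q>0$; the comparison-of-spectral-sequences argument is what gives the general statement.
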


\begin{proof} For every simplex $y\colon \Delta[n]\to Y$ of the simplex category $\Delta/Y$ we have the following commutative diagram
$$
\xymatrix @R-1ex@C+2em{F_y\drpullback\ar[r]^{\bar y}\ar[d]&X\ar[d]^f\ar[r]^f&Y\ar[d]^{id}\\
\Delta[n]\ar[r]_-y&Y\ar[r]_{id}&Y.}
$$

The naturality of the spectral sequence of Theorem~\ref{H^*GZ} gives a morphism of spectral sequences $
E^{*, *}_r(id, T)\rightarrow E^{*, *}_r(f, T) 
$.
Because $f$ is locally cohomological trivial, we get an isomorphism of $E_2$-pages i.e., 
$
E^{*, *}_2(id, T)
\stackrel{\cong}\rightarrow 
E^{*, *}_2(f, T)
$. Therefore we also get an isomorphism of the abutments, which implies the statement.
\end{proof}

Dually, we can derive a homology spectral sequence computing the Gabriel-Zisman homologies for a simplicial map $f\colon X\rightarrow Y$, which gives the dual version of Theorem~\ref{H^*GZ}.

\begin{theorem}\label{H_*GZ} 
Let $X$ and $Y$ be simplicial sets and $f\colon  X\rightarrow Y$ be a map of simplicial sets. Let $\Aa$ be a cocomplete abelian category with exact coproducts. Given a contravariant Gabriel-Zisman natural system $T\colon  (\Delta/X)^{op}\rightarrow \Aa$ on $X$, there is a homology spectral sequence
$$E^2_{p,q}\cong H^{GZ}_p(Y, (L_q ((\Delta/f)^{op})_*)(T))\Rightarrow H_{p+q}^{GZ}(X, T)$$
which is natural in $f$ and $T$ and where $L_q ((\Delta/f)^{op})_*=Lan_q^{(\Delta/f)^{op}}$ is the $q$-th left satellite of $Lan^{(\Delta/f)^{op}}$, the left Kan extension along the induced functor  $(\Delta/f)^{op}\colon (\Delta/X)^{op}\to(\Delta/Y)^{op}$ between the simplex categories.
\end{theorem}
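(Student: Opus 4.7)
The proof will be the homology dual of Theorem~\ref{H^*GZ}. The plan is to apply Theorem~\ref{GrHFSS} with $\Cc=\Delta/X$, $\Dd=\Delta/Y$, $P=P_X\colon\Delta/X\to\Delta$, $Q=Q_Y\colon\Delta/Y\to\Delta$ and $u=\Delta/f\colon\Delta/X\to\Delta/Y$, and then translate the resulting spectral sequence into Gabriel-Zisman form by means of Theorem~\ref{GZTor}.

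First I would verify the compatibility $P_X = Q_Y\circ(\Delta/f)$, which is immediate since $\Delta/f$ sends an object $(\Delta[m]\stackrel{x}{\to} X)$ to $(\Delta[m]\stackrel{fx}{\to} Y)$, preserving the underlying $[m]$. With this hypothesis in hand, Theorem~\ref{GrHFSS} applies and produces a spectral sequence
$$E^2_{p,q}\;\cong\;H_p(Q_Y,\,Lan_q^{\Delta/f}(T))\;\Longrightarrow\;H_{p+q}(P_X,\,T),$$
natural in $u$ and $T$; the underlying diagram of functor categories is the evident dualisation of the one displayed in the proof of Theorem~\ref{H^*GZ}, with left Kan extensions and their left satellites replacing the right Kan extensions and their right satellites.

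Next, Theorem~\ref{GZTor} identifies $H_{p+q}(P_X,T)\cong H^{GZ}_{p+q}(X,T)$ as the abutment, and similarly $H_p(Q_Y,\,Lan_q^{\Delta/f}(T))\cong H^{GZ}_p(Y,\,Lan_q^{\Delta/f}(T))$. By Definition~\ref{defCoLan}, the functor $Lan^{\Delta/f}$ is by construction the left Kan extension along $(\Delta/f)^{op}\colon(\Delta/X)^{op}\to(\Delta/Y)^{op}$, so its $q$-th left satellite is precisely what the statement denotes $L_q((\Delta/f)^{op})_*$. Substituting these identifications gives the spectral sequence in the claimed form, and naturality in $f$ and $T$ is inherited from the corresponding clause of Theorem~\ref{GrHFSS} together with the natural identifications supplied by Theorem~\ref{GZTor}.

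The only real difficulty is administrative: one must consistently track the various $(-)^{op}$'s so that the left satellites arising from Theorem~\ref{GrHFSS}, formulated via Definition~\ref{defCoLan} using a contravariant input $T\colon(\Delta/X)^{op}\to\Aa$, coincide with the notation $L_q((\Delta/f)^{op})_*$ used in the statement. Once this bookkeeping is in place, the argument introduces no new ideas beyond the straightforward dualisation of the proof of Theorem~\ref{H^*GZ}.
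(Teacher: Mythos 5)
Your argument is correct, but it takes a slightly different route from the paper's own proof. You dualise the proof of Theorem~\ref{H^*GZ} verbatim: you invoke Theorem~\ref{GrHFSS} for the factorisation $P_X=Q_Y\circ(\Delta/f)$, so your spectral sequence arises from deriving the composite of left Kan extensions $Lan^{P_X}\cong Lan^{Q_Y}\circ Lan^{(\Delta/f)^{op}}$ landing in $\Fun(\Delta^{op},\Aa)$, and only afterwards do you identify $H_*(P_X,T)$ and $H_*(Q_Y,-)$ with Gabriel--Zisman homology via Theorem~\ref{GZTor}. The paper instead works with the diagram whose bottom vertex is $\Aa$ itself: it derives the composite of colimit functors $\clim^{(\Delta/X)^{op}}\cong\clim^{(\Delta/Y)^{op}}\circ Lan^{(\Delta/f)^{op}}$, cites the Andr\'e spectral sequence of \cite[Theorem 1.4]{GNT} to obtain $E^2_{p,q}\cong H_p((\Delta/Y)^{op},(L_q((\Delta/f)^{op})_*)(T))\Rightarrow H_{p+q}((\Delta/X)^{op},T)$, and then applies Theorem~\ref{GZTor}. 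Both are instances of the Grothendieck composite-functor spectral sequence with the same middle term $Lan^{(\Delta/f)^{op}}$, so the $E^2$-pages and abutments agree; your version buys exact symmetry with the cohomological case and reuses the already-stated Theorem~\ref{GrHFSS}, while the paper's version ties the $E^2$-page directly to derived colimits over the simplex categories, which is what the subsequent corollary on fibre data exploits. Your closing caveat about tracking the $(-)^{op}$'s is exactly right and is the only point where care is needed: in the paper's conventions (Definition~\ref{defCoLan}) the symbol $Lan^{u}(T)$ already denotes the left Kan extension along $u^{op}$, so the $Lan_q^{\Delta/f}(T)$ produced by Theorem~\ref{GrHFSS} is literally the $L_q((\Delta/f)^{op})_*(T)$ of the statement.
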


\begin{proof}Let $X$ and $Y$ be simplicial sets and $\Aa$ be a cocomplete abelian category with exact coproducts. 
Given a map $f\colon  X\rightarrow Y$ of simplicial sets we have the following commutative diagram:
$$
\xymatrix{
*+++{\Fun((\Delta/X)^{op},\Aa)}\ar @<-5pt>[rrrr]_{Lan^{(\Delta/f)^{op}}} \ar @<5pt>[rrdd]^-{\hspace*{0.4cm}\clim^{(\Delta/X)^{op}}}
&&&& *+++{\Fun((\Delta/Y)^{op},\Aa)} \ar @<-5pt>[llll]_{((\Delta/f)^{op})^*}   \ar @<5pt>[lldd]^-{\clim^{(\Delta/Y)^{op}}} \\ \\
&&\Aa\ar @<5pt>[rruu]^-c \ar @<5pt>[lluu]^-c
}
$$
Here, $c$ denotes the respective constant diagram functors and ${((\Delta/f)^{op})}^*$ is pre-composition with $(\Delta/f)^{op}$, the induced functor between the simplex categories of $X$ and $Y$. The other functors in the diagram are the left adjoints of these, given by the limits $\clim^{(\Delta/X)^{op}}$, $\clim^{(\Delta/Y)^{op}}$ and by $Lan^{{(\Delta/f)^{op}}}$, which is the left Kan extension along the functor $(\Delta/f)^{op}$.

We obtain a Grothendieck spectral sequence~\cite{G} for the derived functors of the composite functor
$$\clim^{(\Delta/X)^{op}}(-)=\clim^{(\Delta/Y)^{op}} Lan^{(\Delta/f)^{op}}(-)$$
which can be interpreted as an Andr\'e spectral sequence as constructed in generality in~\cite[Section 1.1]{GNT} (see also~\cite{An} and~\cite{C}).

In our situation here it converges to the homology of the simplex category $\Delta/X$ of the simplicial set $X$ with coefficients being a contravariant Gabriel-Zisman natural system $T$ of
$\Fun((\Delta/X)^{op},\Aa)$. Therefore,~\cite[Theorem 1.4]{GNT} gives a cohomology spectral sequence of the form:
$$
E^2_{p,q}\cong H_p((\Delta/Y)^{op}, (L_q ((\Delta/f)^{op})_*)(T))\Rightarrow H_{p+q}((\Delta/X)^{op}, T)
$$
where $L_q ((\Delta/f)^{op})_*$ is the $q$-th left satellite of $Lan^{(\Delta/f)^{op}}$.

Identifying the homologies of the involved simplex categories $(\Delta/X)^{op}$ and $(\Delta/Y)^{op}$ with the Gabriel-Zisman homologies of the given simplicial sets $X$ and $Y$ using Proposition~\ref{GZTor} finally gives us the homology spectral sequence
$$E^2_{p,q}\cong H^{GZ}_p(Y, (L_q ((\Delta/f)^{op})_*)(T))\Rightarrow H_{p+q}^{GZ}(X, T).$$
The naturality of the spectral sequence with respect to $f$ and $T$ follows directly from the construction.
\end{proof}

Again, we can identify the $E^2$-term of the spectral sequence by relating the satellites of the left Kan extension to derived colimit data of the fiber of the simplicial map $f$.

\begin{corollary}
Let $X$ and $Y$ be simplicial sets and $f\colon  X\rightarrow Y$ be a map of simplicial sets. Let $\Aa$ be a cocomplete abelian category with exact coproducts.  Let $T\colon  (\Delta/X)^{op}\rightarrow \Aa$ be a contravariant Gabriel-Zisman natural system on $X$.
Then there is a homology spectral sequence of the form
$$E^2_{p,q}\cong H^{GZ}_p(Y,  \h_q^{GZ}((\Delta/f)/-, T\circ Q_{(-)}))\Rightarrow H_{p+q}^{GZ}(X, T)$$
which is natural in $f$ and $T$ and where  
$$\h_q^{GZ}((\Delta/f)/-, T\circ Q_{(-)})= \clim_q^{(\Delta/f)^{op}/-}(T\circ Q_{(-)}^{op})\colon  (\Delta/Y)^{op}\rightarrow \Aa.$$
\end{corollary}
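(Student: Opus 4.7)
The plan is to dualize the proof of Corollary~\ref{CorGZcoh}. Applying Theorem~\ref{H_*GZ} already produces the homology spectral sequence with the correct abutment $H_{p+q}^{GZ}(X,T)$, so the entire task reduces to identifying the coefficient system $y\mapsto (L_q((\Delta/f)^{op})_*)(T)(y)$ on $(\Delta/Y)^{op}$ pointwise as a derived colimit over a slice category associated to $\Delta/f$ and $y$.

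For each simplex $y\in\Delta/Y$ I would invoke the pointwise formula for left Kan extensions, which expresses $(Lan^{(\Delta/f)^{op}}T)(y)$ as the colimit over the slice category $(\Delta/f)^{op}/y$ of the composite $T\circ Q_y^{op}$, where $Q_y\colon (\Delta/f)/y\to\Delta/X$ is the forgetful functor from the slice of $\Delta/f$ over $y$ (cf.\ Lemma~\ref{GZfib}). Passing to the $q$-th left satellite then gives
$$(L_q((\Delta/f)^{op})_*)(T)(y)\;\cong\;\clim_q^{(\Delta/f)^{op}/y}\bigl(T\circ Q_y^{op}\bigr),$$
which by definition is $\h_q^{GZ}((\Delta/f)/y,T\circ Q_y)$. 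This is the exact dual of the limit-over-coslice identification from \cite[Corollary 1.3]{GNT} used in the proof of Corollary~\ref{CorGZcoh}, and it is obtained by reversing all arrows and replacing limits by colimits throughout that argument.

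Substituting this pointwise identification into the $E^2$-page of the spectral sequence from Theorem~\ref{H_*GZ} yields the stated formula, and naturality in $f$ and $T$ is inherited from Theorem~\ref{H_*GZ} together with the naturality of the pointwise Kan extension formula. The only delicate step is confirming that the left satellites commute with the pointwise colimit formula; this is guaranteed by the hypothesis that $\Aa$ is cocomplete with exact coproducts, so apart from dualizing carefully, the proof is entirely formal once Corollary~\ref{CorGZcoh} is established.
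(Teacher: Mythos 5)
Your proposal is correct and follows essentially the same route as the paper: start from the spectral sequence of Theorem~\ref{H_*GZ} and identify the coefficients $(L_q((\Delta/f)^{op})_*)(T)(y)$ pointwise as derived colimits over the slice $(\Delta/f)^{op}/y$ via the forgetful functor, which is exactly the dual statement the paper imports as \cite[Corollary 1.5]{GNT} (the dual of the \cite[Corollary 1.3]{GNT} used for Corollary~\ref{CorGZcoh}). The ``delicate step'' you flag --- that left satellites are computed pointwise by these derived colimits --- is precisely what that cited corollary supplies, so your argument matches the paper's.
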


\begin{proof}
For each simplex $y$ of $(\Delta/Y)^{op}$, let $Q_{(y)}^{op}\colon (\Delta/f)^{op}/y\to (\Delta/X)^{op}$ be the forgetful functor
and denote by $\h_q^{GZ}((\Delta/f)/y), T\circ Q_{(y)})$ the derived colimit
$${\clim}_q\left( (\Delta/f)^{op}/y\stackrel{Q_{(y)}^{op}}\longrightarrow 
(\Delta/X)^{op}\stackrel{T}\longrightarrow \Aa\right).$$
Using~\cite[Corollary 1.5]{GNT} allows us now to
identify the terms in the $E^2$-page of 
the above spectral sequence as 
$$
E^2_{p,q}\cong H^{GZ}_p(Y,  \h_q^{GZ}((\Delta/f)/-, T\circ Q_{(-)})).
$$
while the spectral sequence converges to the same abutment. 
\end{proof}

\subsection{Specialisation of coefficient systems and spectral sequences}
In this final subsection we will specialise the general coefficient systems in order to identify the $E_2$-terms of the (co)homology spectral sequence further.  The classical Leray-Serre spectral sequences for Kan fibrations of simplicial sets will appear as a special case. Let us start by introducing some useful special Gabriel-Zisman natural systems in order to simplify our Leray type spectral sequences in various situations.

\begin{definition} Let $X$ be a simplicial set and $\Mm$ be a category.   A (covariant) Gabriel-Zisman natural system $T\colon  \Delta/X\rightarrow \Mm$ on $X$ is called \emph{invertible} or a \emph{(covariant) local system} if it sends all morphism of $\Delta/X$ to isomorphisms of $\Mm$.

Dually,  a (contravariant) Gabriel-Zisman natural system $T\colon  (\Delta/X)^{op}\rightarrow \Mm$ on $X$ is called \emph{invertible}
or a \emph{(contravariant) local system} if it sends all morphism of $(\Delta/X)^{op}$ to isomorphisms of $\Mm$.
\end{definition}

Let $X$ be a simplicial set and $\Mm$ be a category.  Let $T\colon  \Delta/X\rightarrow \Mm$ be a (covariant) invertible Gabriel-Zisman natural system on $X$. Then we can define the functor $T^{-1}\colon  (\Delta/X)^{op}\rightarrow \Mm$, whose value on objects is the same as for the functor $T$ and whose value on a morphism $\alpha$ of $\Delta/X$ is $T^{-1}(\alpha)=T(\alpha)^{-1}$.
Dually, given a (contravariant) invertible Gabriel-Zisman natural system $T\colon  (\Delta/X)^{op}\rightarrow \Mm$ on $X$, we can define similarly the functor $T^{-1}\colon  \Delta/X \rightarrow \Mm$, whose value on objects is the same as for the functor $T$ and whose value on a morphism $\alpha$ of $(\Delta/X)^{op}$ is $T^{-1}(\alpha)=T(\alpha)^{-1}$.

The following proposition gives an alternative description of Gabriel-Zisman (co)homology for invertible coefficient functors (compare also~\cite[App. II. 4.4]{GZ}).

\begin{proposition}\label{GZcohoid}
Let $X$ be a simplicial set and $\Aa$ be a complete and cocomplete abelian category with exact products and coproducts. 

Given a (covariant) local system $T\colon  \Delta/X\rightarrow \Aa$ on $X$, there is an isomorphism
$$H_{ GZ }^* (X, T)=H^*(\Delta/X, T)\cong H^*((\Delta/X)^{op}, T^{-1}),$$
natural in $X$ and $T$.

Dually, given a (contravariant) local system $T\colon  (\Delta/X)^{op}\rightarrow \Aa$ on $X$, there is an isomorphism $$H_*^{GZ}(X, T)=H_*((\Delta/X)^{op} , T)\cong H_*(\Delta/X, T^{-1}),$$
natural in $X$ and $T$.
\end{proposition}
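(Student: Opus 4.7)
The first equality in each assertion is the content of Theorems~\ref{GZExt} and~\ref{GZTor}, so the substantive content is the comparison between covariant coefficients on $\Delta/X$ and the opposite picture. My plan is to establish this via an explicit chain-level map using the invertibility of $T$.

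By Theorem~\ref{GZExt}, the Gabriel-Zisman cohomology equals $\llim^n_{\Delta/X} T$, which can be computed via the Bousfield-Kan cosimplicial replacement as the cohomology of a cochain complex
$$
C^n \;=\; \prod_{\sigma\colon c_0\to\cdots\to c_n \,\in\, \Delta/X} T(c_n),
$$
indexed by chains of $n$ composable morphisms in $\Delta/X$, with $T$ evaluated at the final object. Similarly, $\llim^n_{(\Delta/X)^{op}} T^{-1}$ is computed by a cochain complex $\tilde C^n$ with the same indexing set (after relabeling chains in $(\Delta/X)^{op}$ as reversed chains in $\Delta/X$, and exploiting that $T^{-1}$ agrees with $T$ on objects), but with $T$ evaluated at the initial object $c_0$ instead. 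For the homology statement, the dual construction yields analogous chain complexes with products replaced by coproducts.

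The comparison map is $\Phi^n\colon C^n \to \tilde C^n$ defined degreewise by $\Phi^n(\phi)(\sigma) = T(F_\sigma)^{-1}(\phi(\sigma))$, where $F_\sigma\colon c_0\to c_n$ denotes the composition of the arrows along $\sigma$; this is a componentwise isomorphism since $T$ is invertible. The main calculation is to check that $\Phi$ commutes with differentials up to a standard sign twist $\epsilon_n$: the interior face maps, which delete a non-extremal vertex and compose two adjacent arrows, leave the total composition $F_\sigma$ unchanged and are therefore intertwined trivially by $\Phi$; the outer face maps on the $C^*$ side produce boundary terms involving $T(f_{n+1})$ and a value in $T(c_1)$, which correspond to the analogous outer terms on the $\tilde C^*$ side through the invertibility identities $T(F_\sigma)^{-1} T(f_{n+1}) = T(F_{d_{n+1}\sigma})^{-1}$ and $T(F_\sigma)^{-1} = T(f_1)^{-1} T(F_{d_0\sigma})^{-1}$. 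After absorbing the sign twist, $\Phi$ is an honest chain isomorphism, yielding $H^*(\Delta/X, T) \cong H^*((\Delta/X)^{op}, T^{-1})$, and the dual map $\Psi^n(\phi)(\sigma) = T(F_\sigma)(\phi(\sigma))$ handles the homology case.

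Naturality in $X$ and $T$ is clear from the explicit formulas: $\Phi$ and $\Psi$ are manifestly natural in $T$, and a simplicial map $f\colon X\to X'$ induces a functor $\Delta/f\colon \Delta/X\to\Delta/X'$ under which both maps are compatible. The main technical obstacle will be the careful sign bookkeeping in the chain-map verification, essentially analogous to the signs arising from the canonical order-reversing homeomorphism between $|\Ner{\Cc}|$ and $|\Ner{\Cc^{op}}|$; modulo this, the argument is straightforward since invertibility of $T$ is what lets all the outer-face compatibilities close up.
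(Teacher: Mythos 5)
Your argument is correct, but it takes a genuinely different route from the paper. The paper disposes of the proposition in two lines: it identifies Gabriel--Zisman homology with Thomason homology of the category of simplices and then invokes the corresponding proposition of \cite[App.\ II.4.4]{GZ} verbatim, obtaining the cohomology case by duality. You instead give a self-contained chain-level proof: you realise both sides as bar/cobar complexes indexed by chains in the nerve of $\Delta/X$, observe that reversing chains identifies the indexing set for $(\Delta/X)^{op}$ with that for $\Delta/X$ while moving the coefficient from the terminal vertex $c_n$ to the initial vertex $c_0$, and then transport along the total composite via $T(F_\sigma)^{\pm1}$. The two outer-face identities you invoke, $T(F_\sigma)^{-1}T(f_{n+1})=T(F_{d_{n+1}\sigma})^{-1}$ and $T(F_\sigma)^{-1}=T(f_1)^{-1}T(F_{d_0\sigma})^{-1}$, are exactly right, and the resulting relation $\Phi^{n+1}\circ d=(-1)^{n+1}\,\tilde d\circ\Phi^{n}$ is indeed absorbed by the sign $\epsilon_n=(-1)^{n(n+1)/2}$, so that $\epsilon_\bullet\Phi^\bullet$ is an isomorphism of cochain complexes; the interior faces are handled correctly since they preserve the total composite. (One cosmetic slip: the $d^0$-term of $d\phi$ lives in $T(c_{n+1})$, not in $T(c_1)$; the $T(c_1)$-valued expression only appears after applying $T(F_\sigma)^{-1}$ and factoring it as above.) What your approach buys is an explicit, verifiable isomorphism that makes naturality in $X$ and $T$ transparent and shows the statement holds for an arbitrary small category with invertible coefficients, not only for simplex categories; what the paper's approach buys is brevity, at the cost of delegating the actual content to the literature.
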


\begin{proof}
In the case of homology, this follows verbatim as in the proof of the proposition in~\cite[App. II.4.4]{GZ} by interpreting Gabriel-Zisman homology of simplicial sets as Thomason homology of small categories applied to the respective categories of simplicies (see~\cite{GNT2}). The case for cohomology follows analogous from the dual arguments using $\Delta/X$ instead of $(\Delta/X)^{op}$.
\end{proof}

Now let $f\colon  X\rightarrow Y$ be a map of simplicial sets, which is locally cohomologically constant and let $T\colon  \Delta/X\rightarrow \Aa$ be a Gabriel-Zisman natural system.  Then we obtain an induced covariant functor for each $q\geq0$
$$
\h^q_{GZ}(F_{(-)},T_{(-)})^{-1}
\colon \Delta/Y\to \Aa
$$
defined on objects by
$$
y\mapsto H^q_{GZ}(F_{y},T_{y})
$$ 
and which maps morphisms $\theta$ from $y$ to $y'$ in $\Delta/X$ to the induced inverse morphism 
$$
(\theta^*)^{-1}\colon H^q_{GZ}(F_{y},T_{y})\to H^q_{GZ}(F_{y'},T_{y'}).
$$

This allows us to derive the following cohomology spectral sequence for locally cohomologically constant maps of simplicial sets 

\begin{proposition}
Let $X$ and $Y$ be simplicial sets and $f\colon  X\rightarrow Y$ be a map of simplicial sets, which is locally cohomologically constant. Let $\Aa$ be a complete abelian category with exact products and $T\colon  \Delta/X\rightarrow \Aa$ be a Gabriel-Zisman natural system on $X$.
Then there is a cohomology spectral sequence of the form
$$E_2^{p,q}\cong H_{GZ}^p(Y,  \h^q_{GZ}(F_{(-)},T_{(-)})^{-1})\Rightarrow H^{p+q}_{GZ}(X, T)$$
which is natural in $f$ and $T$. 
\end{proposition}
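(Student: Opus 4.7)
The plan is to obtain this spectral sequence as a direct reinterpretation of the one from Corollary~\ref{CorGZcoh}. Applying that corollary to $f\colon X\to Y$ and $T$ produces the spectral sequence
\[
E_2^{p,q}\cong H_{GZ}^p(Y,\,\h^q_{GZ}(-/(\Delta/f),\, T\circ Q^{(-)}))\Rightarrow H^{p+q}_{GZ}(X, T),
\]
whose $E_2$-coefficient is a covariant functor $\Delta/Y\to \Aa$. The task then reduces to identifying this coefficient functor with $\h^q_{GZ}(F_{(-)},T_{(-)})^{-1}$ under the hypothesis that $f$ is locally cohomologically constant.

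I would first use Lemma~\ref{GZfib} to identify the comma category appearing in the corollary's $E_2$-coefficient with the simplex category $\Delta/F_y$ of the fiber. Under this identification, the restriction $T\circ Q^{(y)}$ becomes precisely the induced natural system $T_y$, and hence there is a canonical isomorphism
\[
\h^q_{GZ}(y/(\Delta/f),\, T\circ Q^{(y)})\;\cong\;\llim^q_{\Delta/F_y}T_y\;=\;H^q_{GZ}(F_y, T_y)
\]
for every simplex $y$ of $Y$, matching the values of the two coefficient functors object by object.

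The functoriality in $y$ is more subtle and is where the locally cohomologically constant hypothesis enters. By the remark preceding the definition of locally cohomologically constant maps, $H^q_{GZ}(F_{(-)}, T_{(-)})$ is naturally \emph{contravariant} via the maps $\theta^*$ induced from $F_\theta\colon F_y\to F_{y'}$ by contravariance of Gabriel-Zisman cohomology in the simplicial set, whereas the corollary's coefficient is covariant in $y$. The hypothesis says each $\theta^*$ is an isomorphism, so inverting yields a well-defined covariant functor $\h^q_{GZ}(F_{(-)},T_{(-)})^{-1}\colon \Delta/Y\to\Aa$ sending $\theta$ to $(\theta^*)^{-1}$. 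A direct compatibility check, using the naturality of the comma-category identification of Lemma~\ref{GZfib} together with functoriality of the right Kan extension in its indexing comma square, shows that this inverted functor agrees with the corollary's covariant coefficient on both objects and morphisms.

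Substituting this identification into the spectral sequence of Corollary~\ref{CorGZcoh} gives the desired statement, and naturality in $f$ and $T$ is inherited from the corollary together with the fact that forming the inverse of a natural isomorphism is itself natural. The main obstacle I anticipate is the careful bookkeeping of variance in the last step: one must genuinely verify, rather than merely assert, that the covariant $y$-structure arising from the right Kan extension really is inverse to the contravariant $y$-structure coming from pullback functoriality of the fibers $F_{(-)}$, rather than simply agreeing with it object by object.
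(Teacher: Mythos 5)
Your proposal follows the same route as the paper's proof: specialise Corollary~\ref{CorGZcoh} to $f$ and $T$, and identify its covariant coefficient functor with $\h^q_{GZ}(F_{(-)},T_{(-)})^{-1}$, using the locally cohomologically constant hypothesis to invert the contravariant $\theta^*$-structure. The paper's proof is exactly this identification, asserted as a natural isomorphism without further detail, and you correctly flag the variance bookkeeping as the delicate point. There is, however, one concrete step in your argument that does not follow from the lemma you cite. Lemma~\ref{GZfib} identifies $\Delta/F_y$ with the \emph{left} fiber $(\Delta/f)/y$, whose objects are pairs $(x,\sigma)$ with $\sigma\colon (\Delta/f)(x)\to y$, i.e.\ $f\circ x = y\circ\sigma$; but the coefficient of Corollary~\ref{CorGZcoh} is a derived limit over the \emph{right} fiber $y/(\Delta/f)$, whose objects are pairs $(x,\theta)$ with $\theta\colon y\to f\circ x$ — this is forced by the pointwise formula for the right Kan extension. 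These two comma categories are not isomorphic in general, so the ``canonical isomorphism'' $\h^q_{GZ}(y/(\Delta/f),\,T\circ Q^{(y)})\cong H^q_{GZ}(F_y,T_y)$ that you assert objectwise is itself a nontrivial claim and not a consequence of Lemma~\ref{GZfib}; some comparison of the two comma categories (a cofinality or explicit resolution argument) is needed already at the level of objects, before the inversion of the morphism structure. To be fair, the paper's own proof elides exactly the same point, so your writeup is no less complete than the original; but if you want a self-contained argument, this objectwise identification is the gap to close.
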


\begin{proof}
This follows from the identification of the $E_2$-page of the general Leray type spectral sequence in Corollary~\ref{CorGZcoh} for the particular case of a given locally cohomologically constant map of simplicial sets using the natural isomorphism
$$\h^q_{GZ}(-/(\Delta/f), T\circ Q^{(-)}) \cong  \h^q_{GZ}(F_{(-)},T_{(-)})^{-1}).$$ 
The abutment of the spectral sequence does not change and it is again natural in $f$ and $T$.
\end{proof}

Finally, we will derive the Leray-Serre spectral sequences of a Kan fibration of simplicial sets in cohomology and homology with local coefficients from our general setting (compare also~\cite[App. II.4.4]{GZ},~\cite{Dr}).

Let $f\colon  X\rightarrow Y$ first be any map of simplicial sets. Furthermore, let $\Aa$ be a complete abelian category with exact products and $T\colon  \Delta/X\rightarrow \Aa$ a covariant local system on $X$.
Then Lemma~\ref{GZfib} and Proposition~\ref{GZcohoid} imply
$$(R^n(\Delta/f)_*)(T^{-1})(y)=H^n(F_y, T|_{F_y}),$$
where $T|_{F_y}$ is given as the composition
$$\Delta/F_y \stackrel{\Delta/pr_2}\longrightarrow \Delta/X\stackrel{T}\longrightarrow \Aa$$
If in addition $f\colon  X\rightarrow Y$ is a Kan fibration of simplicial sets, then 
$$(R^n(\Delta/f)_*)(T^{-1})\colon  y\mapsto H^n(F_y, T|_{F_y})$$ 
induces a covariant local system $\h^q_{GZ}(f, T)\colon  \Delta/X\rightarrow \Aa$. 
Dually, we can make similar considerations starting with a cocomplete abelian category with exact coproducts and a contravariant local system $T\colon  (\Delta/X)^{op}\rightarrow \Aa$ on $X$. We then obtain a contravariant local system $\h_q^{GZ}(f, T)\colon  (\Delta/X)^{op}\rightarrow \Aa$ induced by $$(L_n(\Delta/f)^*)(T^{-1})\colon  y\mapsto H_n(F_y, T|_{F_y}),$$
where $T|_{F_y}$ is now given as the composition
$$(\Delta/F_y)^{op}\stackrel{(\Delta/pr_2)^{op}}\longrightarrow (\Delta/X)^{op}\stackrel{T}\longrightarrow \Aa.$$
The following follows now from Theorem~\ref{H^*GZ} and Theorem~\ref{H_*GZ} and recovers the Leray-Serre spectral sequence of a Kan fibration (compare also \cite{Dr}).

\begin{proposition}[Leray-Serre spectral sequence]
Let $f\colon  X\rightarrow Y$ be a map of simplicial sets, which is a Kan fibration. Let $\Aa$ be a complete abelian category with exact products and $T\colon  \Delta/X\rightarrow \Aa$ be a covariant local system on $X$. 
Then there is a cohomology spectral sequence of the form
$$E_2^{p,q}\cong H_{GZ}^p(Y,  \h^q_{GZ}(f, T))\Rightarrow H^{p+q}_{GZ}(X, T)$$
which is natural in $f$ and $T$. 

Dually, let $\Aa$ be a cocomplete abelian category with exact coproducts and $T\colon  (\Delta/X)^{op}\rightarrow \Aa$ be a contravariant local system on $X$. 
Then there is a homology spectral sequence of the form
$$E^2_{p,q}\cong H^{GZ}_p(Y,  \h_q^{GZ}(f, T))\Rightarrow H_{p+q}^{GZ}(X, T)$$
which is natural in $f$ and $T$. 
\end{proposition}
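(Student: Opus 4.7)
The plan is to derive both spectral sequences as direct specialisations of the general Leray-type Theorems~\ref{H^*GZ} and~\ref{H_*GZ}, once the coefficient functor on $Y$ appearing on the $E_2$-page has been identified with fibrewise (co)homology and shown to be a local system using the Kan condition.

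For the cohomology version I would first apply Theorem~\ref{H^*GZ} to obtain the spectral sequence
\[
E_2^{p,q}\cong H_{GZ}^p\bigl(Y,(R^q(\Delta/f)_*)(T)\bigr)\Rightarrow H_{GZ}^{p+q}(X,T),
\]
and then evaluate its $E_2$-coefficient functor pointwise. By the pointwise formula for right Kan extensions, and exactly as in Corollary~\ref{CorGZcoh}, $(R^q(\Delta/f)_*)(T)(y)$ is the $q$-th right satellite of the limit over the left fibre $(\Delta/f)/y$. Lemma~\ref{GZfib} identifies this left fibre with the simplex category $\Delta/F_y$ of the simplicial fibre $F_y=\Delta[n]\times_Y X$, and under this identification the composite with $T$ becomes the restricted Gabriel-Zisman natural system $T|_{F_y}=T\circ(\Delta/\bar y)$. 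Applying Theorem~\ref{GZExt} to the simplicial set $F_y$ then yields the natural isomorphism
\[
(R^q(\Delta/f)_*)(T)(y)\;\cong\;H_{GZ}^q(F_y,T|_{F_y}),
\]
so the $E_2$-page takes the desired form $H_{GZ}^p\bigl(Y,\,y\mapsto H_{GZ}^q(F_y,T|_{F_y})\bigr)$.

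Second, I must verify that $y\mapsto H_{GZ}^q(F_y,T|_{F_y})$ is a covariant local system $\h^q_{GZ}(f,T)\colon \Delta/Y\to\Aa$, i.e.\ sends every morphism $\theta\colon y\to y'$ of $\Delta/Y$ to an isomorphism. This is where the Kan condition enters: since $f$ is a Kan fibration, hence a fibration in the right proper Kan-Quillen model structure, the pullback $F_\theta\colon F_y\to F_{y'}$ of the weak equivalence $\theta\colon \Delta[n]\to\Delta[n']$ along $f$ is again a weak equivalence of simplicial sets. Combined with the local-system hypothesis on $T$, which ensures that $T|_{F_y}$ and $T|_{F_{y'}}$ are themselves local systems compatible along $F_\theta$, this forces $F_\theta^*\colon H_{GZ}^q(F_{y'},T|_{F_{y'}})\to H_{GZ}^q(F_y,T|_{F_y})$ to be an isomorphism by homotopy invariance of Gabriel-Zisman cohomology for local coefficients. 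The dual homology statement is obtained in the same way from Theorem~\ref{H_*GZ}, replacing right Kan extensions and Theorem~\ref{GZExt} by their duals and by Theorem~\ref{GZTor}; Proposition~\ref{GZcohoid} mediates between the covariant and contravariant local-system conventions as required.

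The main obstacle I expect is this second step: establishing the homotopy invariance of Gabriel-Zisman cohomology with local coefficients along the fibrewise weak equivalences produced by the Kan condition. Homotopy invariance for constant coefficients is essentially the Illusie-Latch equivalence $\Ner(\Delta/X)\simeq X$ recalled in the preliminaries, but for genuine local systems it requires a more delicate descent-type argument; once this is in place, the remainder of the proof is purely formal and consists of invoking the machinery already assembled in Theorems~\ref{H^*GZ}, \ref{H_*GZ}, \ref{GZExt} and \ref{GZTor}.
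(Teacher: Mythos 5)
Your overall route is the same as the paper's: the paper also obtains both spectral sequences by specialising Theorems~\ref{H^*GZ} and~\ref{H_*GZ}, identifying the derived Kan extension pointwise with fibrewise Gabriel--Zisman (co)homology via Lemma~\ref{GZfib}, and then using the Kan condition to see that $y\mapsto H^q_{GZ}(F_y,T|_{F_y})$ is a local system; like you, the paper treats this last point rather briskly (with a pointer to Dress), so your honest flagging of the homotopy-invariance input is consistent with, indeed more candid than, the source.

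There is, however, one slip in your identification of the $E_2$-page. The pointwise formula for the right Kan extension $Ran_{\Delta/f}(T)$ computes its value at $y$ as a (derived) limit over the \emph{right} fibre $y/(\Delta/f)$ --- this is exactly the comma category appearing in Corollary~\ref{CorGZcoh} --- whereas Lemma~\ref{GZfib} identifies the simplex category $\Delta/F_y$ with the \emph{left} fibre $(\Delta/f)/y$. These two comma categories are not the same, and passing from a derived limit over $y/(\Delta/f)$ to the cohomology of $\Delta/F_y\cong(\Delta/f)/y$ is precisely where the invertibility of $T$ enters: one must use Proposition~\ref{GZcohoid} (and the functor $T^{-1}$, as in the paper's displayed identity $(R^n(\Delta/f)_*)(T^{-1})(y)=H^n(F_y,T|_{F_y})$) to reverse variance and exchange the two fibres. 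In your write-up the local-system hypothesis is invoked only to get local constancy from the Kan condition and, at the very end, for ``convention'' bookkeeping in the dual statement; in fact it is already indispensable for the cohomological identification of the coefficient functor itself. Once this is corrected the argument matches the paper's.
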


Let us finally remark that when taking geometric realisation again we will recover the Leray-Serre spectral sequence for fibrations of topological spaces.

\end{document}